\def\Rec{\mathcal R} 
\def\R{\mathbb{R}}
\def\N{\mathbb{N}}
\def\p{\partial}
\def\range{\textnormal{Range}}
\def\domain{\textnormal{Domain}}
\def\diam{\textnormal{diam}}
\def\linearSpan{\textnormal{span}}
\newcommand{\norm}[1]{\left\|#1 \right\|}
\def\Vol{\textnormal{Vol}_{g}}
\def\wavecap{\textnormal{cap}}
\title{Recovery of a Smooth Metric via Wave Field and Coordinate
  Transformation Reconstruction%
  \thanks{
    Submitted to the editors October 6, 2017.  
    \funding{M.V.d.H. gratefully acknowledges support from the Simons
      Foundation under the MATH + X program, the National Science
      Foundation under grant DMS-1559587, and the corporate members of
      the Geo-Mathematical Imaging Group at Rice University. 
      P.K. was supported in part by the
      Geo-Mathematical Imaging group at Rice University.
      L.O. was supported by EPSRC grants EP/L026473/1 and
      EP/P01593X/1. }}}
\author{
Maarten V. de Hoop%
\thanks{Simons Chair in Computational and Applied Mathematics and
  Earth Science, Rice University, Houston TX 77005, USA
  (\email{mdehoop@rice.edu}).}%
 \and 
 Paul Kepley%
 \thanks{Department of Mathematics, Purdue University, West Lafayette, IN 47907 
 (\email{pkepley@purdue.edu}).}%
 \and 
 Lauri Oksanen%
 \thanks{Department of Mathematics, University College London, Gower Street, London WC1E 6BT, UK
 (\email{l.oksanen@ucl.ac.uk}).}%
}
\begin{document}

\maketitle

\begin{abstract}
  In this paper, we study the inverse boundary value problem for the
  wave equation with a view towards an explicit reconstruction
  procedure.  We consider both the anisotropic problem where the
  unknown is a general Riemannian metric smoothly varying in a domain,
  and the isotropic problem where the metric is conformal to the
  Euclidean metric.  Our objective in both cases is to construct the
  metric, using either the Neumann-to-Dirichlet (N-to-D) map or
  Dirichlet-to-Neumann (D-to-N) map as the data.  In the anisotropic
  case we construct the metric in the boundary normal (or
  semi-geodesic) coordinates via reconstruction of the wave field in
  the interior of the domain.  In the isotropic case we can go further
  and construct the wave speed in the Euclidean coordinates via
  reconstruction of the coordinate transformation from the boundary
  normal coordinates to the Euclidean coordinates.  Both cases utilize
  a variant of the Boundary Control method, and work by probing the
  interior using special boundary sources.  We provide a computational
  experiment to demonstrate our procedure in the isotropic case with
  N-to-D data.
\end{abstract}

\begin{keywords}
  inverse problem, wave equation, boundary control, Riemannian metric
\end{keywords}

\begin{AMS}
  35R30, 35L05
\end{AMS}


\section{Introduction}

We study the inverse boundary value problem for the
wave equation from a computational point of view. Specifically, let $M \subset \R^n$ be a compact connected domain
with smooth boundary $\p M$, and let $c(x)$ be an unknown smooth 
strictly positive function on $M$. 
Let $u = u^f$ denote the solution to the
wave equation on $M$, with Neumann source $f$, 
\begin{equation}
  \label{wave_eq_iso}
  \begin{array}{rcl}
    \p_t^2 u - c^2(x)\Delta u &=& 0, \quad \textnormal{in $(0,\infty) \times M$}, \\
    \p_{\vec n} u|_{x \in \p M} &=& f,  \\
    u|_{t=0}  = \p_t u|_{t=0}, &=& 0.
  \end{array}
\end{equation}
Here $\vec n$ is the inward pointing (Euclidean) unit normal vector on $\p M$.
Let $T > 0$ and let $\Rec \subset \p M$ be open.
We suppose that the restriction of the 
Neumann-to-Dirichlet (N-to-D) map on $(0,2T) \times \Rec$ is known, and denote this map by
$\Lambda_{\Rec}^{2T}$. It is defined by
\begin{equation*}
  \Lambda_{\Rec}^{2T} : f \mapsto u^f|_{(0,2T) \times
    \Rec}, \quad f \in C_0^\infty((0,2T) \times \Rec).
\end{equation*}
The goal of the inverse boundary value problem is to use the data
$\Lambda_{\Rec}^{2T}$ to determine the wave speed $c$ in a subset $\Omega \subset M$ modelling the region of interest. 

Our approach to solve this inverse boundary value problem is based on the Boundary Control method that originates from \cite{Belishev1987}. 
There exists a large number of variants of the Boundary Control method in the theoretical literature, see e.g. the review \cite{Belishev2007}, the monograph \cite{Katchalov2001},
and the recent theoretical uniqueness 
\cite{Eskin2015,Kurylev2015} and stability results \cite{Bosi2017}.
We face an even wider array of possibilities when designing computational implementations of the method.
Previous computational studies of the method
include \cite{Belishev1999,Pestov2010}
and the recent work \cite{Belishev2016}.

Motivated by applications to seismic imaging, we are particularly interested in the problem with 
partial data, that is, the case $\Rec \ne \p M$.
All known variants of the Boundary Control method that work with partial data
require solving ill-posed control problems,
and this appears to form the bottleneck of the resolution of the method. 
In this paper we consider this issue
from two perspectives:
we show that the steps of the method, apart from solving the control problems, are stable;
and present a computational implementation of 
the method with a regularization for the control problems.

In addition to the above isotropic problem with the scalar speed of sound $c$, we consider an anisotropic problem
and a variation where the data is given by the Dirichlet-to-Neumann map 
rather than the Neumann-to-Dirichlet map,
see the definitions (\ref{wave_eq_aniso}) and (\ref{DtoN}) below.
We propose a computational method to reduce the anisotropic inverse boundary value problem to a problem with data in the interior of $M$.
Analogously to elliptic inverse problems with internal data \cite{Bal2013},
this hyperbolic internal data problem may be of independent interest,
and we show a Lipschitz stability result for the problem under a geometric assumption. 
We show the correctness of our method without additional geometric
assumptions (Proposition \ref{prop:computingG}), but for the stability
of the internal data problem in the anisotropic case we require
additional convexity condition to be satisfied (Theorem
\ref{th_main}).

Our computational approach in the isotropic case combines
two techniques that have been successfully used in the previous literature. 
To solve the ill-posed control problems, 
we use the regularized optimization approach that originates from \cite{Bingham2008}. This is combined with the use of the eikonal equation as in the previous computational studies \cite{Belishev1999,Belishev2016}.
The main difference between \cite{Belishev1999,Belishev2016} and the present work is that in \cite{Belishev1999,Belishev2016} the ill-posed control problems, and the subsequent reconstruction of internal information (see Section \ref{sec:interior} below), are 
implemented using the so-called wave bases rather than regularized optimization. Another distinction  is that we do not rely upon the amplitude formula from geometric optics to extract internal information. Instead, we use the boundary data to construct sources that allow us to extract localized averages of waves and harmonic functions in the interior.

Our motivation to study the Boundary Control method comes from potential applications in seismic imaging. The prospect is that the method could provide a good initial guess for the local optimization methods currently in use in seismic imaging.
These methods suffer from the fact that they may converge to a local minimum of the cost function and thus fail to give the true solution to the imaging problem \cite{Symes2009}. On the other hand, the Boundary Control method is theoretically guaranteed to converge to the true solution, however, in practice, we need to give up resolution in order to stabilize the method. 
The numerical examples in this paper show that, when regularized suitably, the method can stably reconstruct smooth variations in the wave speed. 

We reconstruct the wave speed only in a region near the measurement surface $\Rec$, since at least in theory, it is possible to iterate this procedure in a layer stripping fashion. 
The layer stripping alternates between the local reconstruction  step as discussed in this paper and the so-called redatuming step that propagates the measurement data through the region where the wave speed is already known. We have developed the redatuming step computationally in \cite{Hoop2016a}.

We will not attempt to give an overview of computational methods for 
coefficient determination problems for the wave equation
that are not based on the Boundary Control method.
However, we mention the interesting recent computational work 
\cite{Baudouin2016}
that is based on the so-called Bukhgeim-Klibanov method \cite{Bukhgeuim1981}.  
We note that the Bukhgeim-Klibanov method uses different data from the Boundary Control method, requiring only a single instance of boundary values, but that it also requires that the initial data are non-vanishing.
We mention also another reconstruction method 
that uses a single measurement \cite{Beilina2008,Beilina2012}.
This method is based on a reduction to a non-linear integro-differential equation, and there are several papers 
on how to solve this equation (or an approximate version of it), see \cite{Klibanov2015,Klibanov2017} for
recent results including computational implementations.
Finally, we mention \cite{Kabanikhin2005}
for a thorough comparison of several methods in the $1+1$-dimensional case.

\section{Notation and techniques from the Boundary Control method}

The Boundary Control (BC) method is based on the geometrical aspects of wave propagation. These are best described using the language of Riemannian geometry, and in that spirit we define the isotropic Riemannian metric $g = c(x)^{-2} dx^2$ associated to the wave speed $c(x)$ on $M$. 
Put differently, in the Cartesian coordinates of $M$, 
the metric tensor $g$ is represented by $c(x)^{-2}$ times the identity matrix. Now the distance function of the Riemmannian manifold $(M,g)$ encodes the travel times of waves between points in $M$,
and singular wave fronts propagate along the geodesics of $(M,g)$.

We will also discuss the case of an anisotropic wave speed
and the Dirichlet-to-Neumann (D-to-N) map. 
This means that $g$ is allowed to be an arbitrary smooth Riemannian metric on $M$, and we consider the wave equation
\begin{equation}
\label{wave_eq_aniso}
  \begin{array}{rcl}
    \p_t^2 u - \Delta_g u &=& 0, \quad \textnormal{in $(0,\infty) \times M$}, \\
    u|_{x \in \p M} &=& f,  \\
    u|_{t=0}  = \p_t u|_{t=0}, &=& 0
  \end{array}
\end{equation}
together with the map 
\begin{equation}
\label{DtoN}
  \Lambda_{\Rec}^{2T} : f \mapsto -\partial_\nu u^f|_{(0,2T) \times
    \Rec}, \quad f \in C_0^\infty((0,2T) \times \Rec).
\end{equation}
Here $\Delta_g$ is the Laplace-Beltrami operator on the Riemannian manifold $(M,g)$, and $\nu$ is the inward pointing unit normal vector to $\p M$ with respect to the metric $g$.
All the techniques in this section are the same for both the isotropic and anisotropic cases and for both the choices of data N-to-D and D-to-N. The negative sign is chosen in (\ref{DtoN}) 
to unify the below formula (\ref{Blago}) between the two choices of data. 
We leave it to the reader to adapt the formulations for the isotropic case with D-to-N and the anisotropic case with N-to-D.


The BC method is based upon
approximately solving control problems of the form,
\begin{equation}
\label{abstract_cp}
  \textnormal{find $f$ for which $u^f(T,\cdot) = \phi$}
\end{equation}
where the target function $\phi \in L^2(M)$ belongs to an appropriate
class of functions
so that the problem can be solved without knowing the wave speed. 
One could call this problem a {\em blind control problem}.
The earliest formulations of the BC method solved
such control problems by applying a Gram-Schmidt orthogonalization
procedure to the data. However, as noted in \cite{Bingham2008}, this
procedure may itself be ill-conditioned. As a result, regularization
techniques were introduced to the BC method \cite{Bingham2008}. One
issue that arises with this particular regularized approach to the BC
method is that there is no explicit way to choose the target function
$\phi$. Thus in \cite{Oksanen2011} a variation of the regularized approach was
introduced, where the target functions $\phi$ were restricted to the
set of characteristic functions of domains of influence. This
technique uses global boundary data (i.e. $\Rec = \p M$) to
construct boundary distance functions.
In \cite{Hoop2016} we introduced a modification of \cite{Oksanen2011}
that allowed us to localize the problem and work with partial boundary data
(i.e. $\Rec \neq \p M$).
There we also studied the method computationally up to the reconstruction of boundary distance functions. 

It is well-known \cite{Kurylev1997} that the 
boundary distance functions can be used to determine the
geometry (i.e. to determine the metric $g$ up to boundary fixing
isometries). 
While several methods to recover the geometry from the boundary distance functions have been proposed 
\cite{deHoop2014,Katchalov2001,Katsuda2007,Pestov2015},
these have not been implemented computationally to our knowledge.
It appears to us that, at least in the isotropic case, it is better to recover the wave speed directly without first recovering the boundary distance functions. 
In the next two sections, we will describe techniques that allow us to do so in both the isotropic and anisotropic cases. 
These will be based on the control problem setup from
\cite{Hoop2016} and we will recall the setup in this section.

The difference between \cite{Hoop2016}
and the present paper is that we do not use 
the sources $f$ solving the control problems of the form (\ref{abstract_cp})
to construct boundary distance functions, instead 
we will use them to recover information in the interior of $M$.
In the anisotropic case, this information is 
the internal data operator that gives wavefields
solving (\ref{wave_eq_aniso}) in semi-geodesic coordinates.


\subsection{Semi-geodesic coordinates and wave caps}

We consider an open subset $\Gamma \subset \p M$
satisfying 
$$
\{x \in \p M : d(x,\Gamma) \leq T\} \subset \Rec,
$$
where $d$ denotes the Riemannian distance associated with $g$.
We may replace $T$ by a smaller time to guarantee that there exists a non-empty $\Gamma$ satisfying this. 
In what follows we will only use the following further restriction of the N-to-D or D-to-N map
\begin{equation*}
  \Lambda_{\Gamma,\Rec}^{2T} f = \Lambda_{\Rec}^{2T} f|_{(0,2T) \times
    \Rec}, \quad f \in C_0^\infty((0,2T) \times \Gamma).
\end{equation*}

We now recall the definition of semi-geodesic coordinates associated to $\Gamma$. For
$y \in \Gamma$, we define $\sigma_{\Gamma}(y)$ to be the maximal arc length for which the normal geodesic beginning at $y$ minimizes the
distance to $\Gamma$. That is, letting $\gamma(s;y,v)$ denote the point at arc length
$s$ along the geodesic beginning at $y$ with initial velocity $v$, and
$\nu$ the inward pointing unit normal field on $\Gamma$, we define
\begin{equation*}
  \sigma_{\Gamma}(y) := \max \{ s \in (0, \tau_M(y, \nu)]:\ d(\gamma(s; y,
  \nu), \Gamma) = s\}.
\end{equation*}
We recall that $\sigma_{\Gamma}(y) > 0$ for $y \in \overline{\Gamma}$
(see e.g. \cite[p. 50]{Katchalov2001}). Defining,
\begin{equation}
  \textnormal{$x(y,s) := \gamma(s;y,\nu)$ \quad for $y \in \Gamma$ and $0 \leq
    s < \sigma_{\Gamma}(y)$,}
\end{equation}
the mapping
\begin{equation*}
  \Phi_g : \{(y,s) : y \in \Gamma \textnormal{ and } s \in [0, \sigma_\Gamma(y))\}  \to M,
\end{equation*}
given by $\Phi_g(y,s) := x(y,s)$ is a diffeomorphism onto its image in
$(M,g)$, and we refer to the pair $(y,s)$ as the semi-geodesic
coordinates of the point $x(y,s)$. We note that the semi-geodesic
``coordinates'' that we have defined here are not strictly coordinates
in the usual sense of the term, since they associate points in $M$
with points in $\R \times \Gamma$ instead of points in $\R^n$. To
obtain coordinates in the usual sense, one must specify local
coordinate charts on $\Gamma$.  Denoting the local coordinates on
$\Gamma$ associated with these charts by $(y^1,\ldots,y^{n-1})$, one
can then define local semi-geodesic coordinates by
$(y^1,\ldots,y^{n-1},s)$. We will continue to make this distinction,
using the term ``local'' only when we need coordinates in the usual
sense.

In both the scalar and anisotropic cases, our approach to recover
interior information relies on computing localized averages of
functions inside of $M$. One of the main components used to compute
these averages is a family of sources that solve blind control
problems with target functions of the form $\phi = 1_B$, where $B$ is
a set known as a \emph{wave cap}. The construction of these sources
will be recalled below in Lemma \ref{lemma:approxConstControl}, but
first we recall how wave caps are defined:
\begin{definition}
  Let $y \in \Gamma$, $s,h > 0$ with $s + h <
  \sigma_{\Gamma}(y)$. The \emph{wave cap,} $\wavecap_\Gamma(y,s,h)$, is defined as:
  \begin{equation*}
    \wavecap_\Gamma(y,s,h) := \{x \in M : d(x,y) \leq s + h \textnormal{ and } d(x,\Gamma) \geq s\}
  \end{equation*}
  See Figure \ref{fig:waveCapGeom} for an illustration.
\end{definition}

We recall that, for all $h > 0$, the point $x(y,s)$ belongs to the
set $\wavecap_\Gamma(y,s,h)$ and $\diam(\wavecap_\Gamma(y,s,h))
\rightarrow 0$ as $h \rightarrow 0$, (see e.g. \cite{Hoop2016}).  So,
when $h$ is small and $\phi$ is smooth, averaging $\phi$ over
$\wavecap_\Gamma(y,s,h)$ yields an approximation to
$\phi(x(y,s))$. These observations play a central role in our
reconstruction procedures.

\begin{figure}[!htb]
  \centering 
  \includegraphics[width=3.50in]{./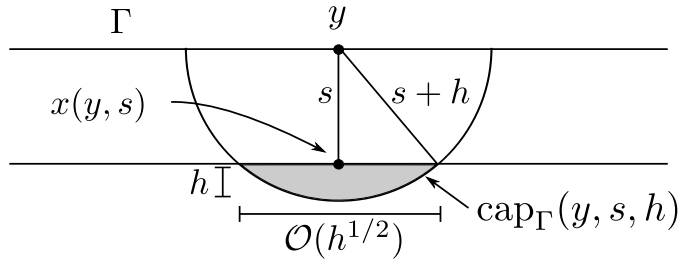}
  \caption{
    \label{fig:waveCapGeom}
    Geometry of a wave cap in the Euclidean case. In this case,
    Pythagoras' theorem suffices to show that
    $\diam(\wavecap_{\Gamma}(y,s,h)) = \mathcal{O}(h^{1/2})$, but this
    is also true in general.}
\end{figure}

\subsection{Elements of the BC method}

As mentioned above, the BC method involves finding sources $f$ for
which $u^f(T,\cdot) \approx \phi$ for appropriate functions $\phi \in
L^2(M)$. To that end, we recall the \emph{control map},
\begin{equation*}
  W : f \mapsto u^f(T,\cdot), \quad \textnormal{for $f \in L^2([0,T]\times
    \Gamma)$},
\end{equation*}
and note that $W$ is a bounded linear operator $W : L^2([0,T] \times
\Gamma) \rightarrow L^2(M)$, see e.g. \cite{Katchalov2001}. We remark
that the output of $W$ is a wave in the interior of $M$ and hence
cannot be observed directly from boundary measurements alone. Using
$W$, one defines the \emph{connecting operator} $K := W^*W$. The
adjoint here is defined with respect to the Riemannian volume measure
in the anisotropic case, and with respect to the scaled Lebesgue
measure $c^{-2}(x) dx$ in the isotropic case.  We denote these
measures by $\Vol$ in both cases.  In particular, we recall that $K$
can be computed by processing the N-to-D or D-to-N map via the
Blagovescenskii identity, see e.g. \cite{Liu2016}. That is,
\begin{equation}
\label{Blago}
  K = J \Lambda_{\Gamma}^{2T} \Theta -  R \Lambda_{\Gamma}^{T} R J \Theta,
\end{equation}
where $\Lambda_{\Gamma}^T f := (\Lambda_{\Gamma,\Rec}^T
f)|_{[0,T]\times\Gamma}$, $R f(t) := f(T -t)$ for $0 \leq t \leq T$,
$J f(t) := \int_t^{2T-t} f(s)\,ds$, and $\Theta $ is the inclusion
operator $\Theta : L^2([0,T] \times \Gamma) \hookrightarrow L^2([0,2T]
\times \Gamma)$ given by $\Theta f(t) = f(t)$ for $0 \leq t \leq T$
and $\Theta f(t) = 0$ otherwise. We remark that the Blagovescenskii
identity shows that $K$ can be computed by operations that only
involve manipulating the boundary data.

We recall some mapping properties of $W$ that follow from finite speed
of propagation for the wave equations (\ref{wave_eq_iso}) and
(\ref{wave_eq_aniso}). Let $\tau : \overline{\Gamma} \rightarrow
[0,T]$, and define $S_\tau := \{(t,y) : T - \tau(y) \leq t \leq
T\}$. Then, finite speed of propagation implies that if $f$ is a
boundary source supported in $S_\tau$, the wavefield $u^f(T,\cdot)$
will be supported in the domain of influence $M(\tau)$, defined by
\begin{equation*}
  M(\tau) := \{ x \in M : d(x,\Gamma) < \tau(y) \textnormal{ for some
    $y \in \Gamma$}\}.
\end{equation*}
In turn, this implies that $W$ satisfies, $W : L^2(S_\tau) \rightarrow
L^2(M(\tau))$.  So, if we define $P_\tau : L^2([0,T]\times\Gamma)
\rightarrow L^2(S_\tau)$, then we can define a restricted control map
$W_\tau := W P_\tau$, which satisfies $W_\tau : L^2(S_\tau)
\rightarrow L^2(M(\tau))$. The point here is that, although we do not
have access to the the output of $W_\tau$, we know that the waves will
be supported in the domain of influence $M(\tau)$. We also define the
restricted connecting operator $K_\tau := (W_\tau)^*W_\tau = P_\tau K
P_\tau$, and note that $K_\tau$ can be computed by first computing $K$
via (\ref{Blago}) and then applying the operator $P_\tau$.

To construct sources that produce approximately constant wavefields on
wave caps, we use a procedure from \cite{Hoop2016}. This procedure
uses the fact that a wave cap can be written as the difference of two
domains of influence, and requires that distances between boundary
points are known. Specifically, we will suppose that for any pair $x,y
\in \Gamma$ the distance $d(x,y)$ is known. As noted in
\cite{Hoop2016}, this is not a major restriction, since these
distances can be constructed from the data
$\Lambda_{\Gamma}^{2T}$. Then, using this collection of distances, we
define a family of functions $\tau_y^R : \overline{\Gamma} \rightarrow
\R_+$ by:
\begin{equation*}
  \textnormal{for $y \in \overline{\Gamma}$ and $R > 0$, define
    $\tau_y^R(x) := (R - d(x,y)) \vee 0$.}
\end{equation*}
Here we use the notation $\phi \vee \psi$ to denote the point-wise
maximum between $\phi$ and $\psi$, and we will continue to use this
notation below. Finally, one can show that $\wavecap_\Gamma(y,s,h) =
\overline{M(\tau_y^{s+h} \vee s1_\Gamma) \setminus M(s1_\Gamma)}$. We
also note that, since $\p M(\tau)$ has measure zero provided that $\tau$ is
continuous on $\overline{\Gamma}$ \cite{Oksanen2011}, one has that
$1_{\wavecap_\Gamma(y,s,h)} = 1_{M(\tau_y^{s+h} \vee s1_\Gamma)} -
1_{M(s1_\Gamma)}$ a.e.

The following lemma is an amalgamation of results from
\cite{Hoop2016}, and shows that there is a family of sources
$\psi_{h,\alpha}$ which produce approximately constant wavefields
$u^{\psi_{h,\alpha}}(T,\cdot)$ on wave caps, and that these sources
can be constructed from the boundary data
$\Lambda_{\Gamma,\Rec}^{2T}$.

\begin{lemma}
  \label{lemma:approxConstControl}
  Let $y \in \Gamma$, $s,h > 0$ with $s + h < \sigma_{\Gamma}(y)$. Let
  $\tau_1 = s 1_\Gamma$ and $\tau_2 = \tau_{y}^{s+h} \vee
  s1_\Gamma$. Define $b(t,y) := T - t$, and let $\widetilde{b} = b$ in
  the Neumann case, and $\widetilde{b} = (\Lambda_{\Gamma,\Rec}^T)^*b$
  in the Dirichlet case. Then, for each $\alpha > 0$, let $f_{\alpha,i} \in
  L^2(S_{\tau_i})$ be the unique solution to
  \begin{equation}    
    \label{eqn:ControlProblem}
    \left(K_{\tau_i} + \alpha\right) f = P_{\tau_i} \widetilde{b}.
  \end{equation}
  Define,
  \begin{equation}
    \label{eqn:definePsi}
    \psi_{h,\alpha} = f_{\alpha,2} - f_{\alpha,1}.
  \end{equation}
  Using the notation $B_h = \wavecap_\Gamma(y,s,h)$, it holds that
  \begin{equation}
    \label{eqn:limitingExpression}
    \lim_{\alpha \rightarrow 0} u^{\psi_{h,\alpha}}(T,\cdot) = 1_{B_h}
    \textnormal{\quad and \quad}
    \lim_{\alpha \rightarrow 0} \langle \psi_{h,\alpha}, P_{\tau_2} b\rangle_{L^2(S_\tau)} = \Vol(B_h).
  \end{equation}
\end{lemma}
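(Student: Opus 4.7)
The plan is to recognize (\ref{eqn:ControlProblem}) as the normal equation for a Tikhonov-regularized least-squares control problem with target $\phi_i := 1_{M(\tau_i)}$, and then pass to the limit $\alpha \to 0$ using approximate controllability. The two pillars are (a) the adjoint identity $W_{\tau_i}^* \phi_i = P_{\tau_i} \widetilde{b}$, which makes (\ref{eqn:ControlProblem}) equivalent to the normal equation $(W_{\tau_i}^* W_{\tau_i} + \alpha) f = W_{\tau_i}^* \phi_i$; and (b) the Tataru unique continuation theorem, which implies that $\range(W_{\tau_i})$ is dense in $L^2(M(\tau_i))$ and in particular that $\phi_i$ is approximately reachable.

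To establish (a), the main calculation is an integration by parts. For $f \in L^2(S_{\tau_i})$ one computes $\langle W_{\tau_i} f, \phi_i\rangle_{L^2(M)} = \int_{M(\tau_i)} u^f(T,x) \, dV(x)$, and then uses the wave equation, integrated twice in time against the linear weight $b(t) = T - t$, together with Green's identity, to reduce the interior integral to a boundary integral of $f$ paired with $\widetilde{b}$. In the Neumann case the pairing is directly with $b$, whereas in the Dirichlet case the computation brings in the extra factor $(\Lambda_{\Gamma,\Rec}^T)^*$, which is precisely the definition of $\widetilde{b}$ given in the statement. Finite speed of propagation together with the hypothesis $\{x \in \p M : d(x,\Gamma) \leq T\} \subset \Rec$ guarantees that only values of the boundary data on $\Rec$ are needed, so the right-hand side is computable from $\Lambda_{\Gamma,\Rec}^{2T}$.

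For (b), an appeal to Tataru's theorem in $(M,g)$, combined with the explicit description of the domain of influence $M(\tau_i)$, yields density of $\range(W_{\tau_i})$ in $L^2(M(\tau_i))$. Since $K_{\tau_i} = W_{\tau_i}^* W_{\tau_i}$ is self-adjoint and non-negative, $K_{\tau_i} + \alpha$ is boundedly invertible for $\alpha > 0$, so $f_{\alpha,i}$ is well-defined and equal to the unique minimizer of $\|W_{\tau_i} f - \phi_i\|^2 + \alpha \|f\|^2$. Standard Tikhonov regularization theory (filtered spectral calculus applied to $K_{\tau_i}$, using (a) to identify the data side with $W_{\tau_i}^* \phi_i$ and (b) to conclude that the null-space contribution vanishes) then gives $W_{\tau_i} f_{\alpha,i} \to \phi_i$ in $L^2(M)$ as $\alpha \to 0$.

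Assembling the pieces, extend $f_{\alpha,1}$ by zero from $S_{\tau_1}$ to $S_{\tau_2}$, which is legitimate since $\tau_1 \leq \tau_2$. Linearity of $W$ then gives $u^{\psi_{h,\alpha}}(T,\cdot) = W_{\tau_2} f_{\alpha,2} - W_{\tau_1} f_{\alpha,1} \to \phi_2 - \phi_1 = 1_{B_h}$ in $L^2(M)$, where the last equality uses the identity $\wavecap_\Gamma(y,s,h) = \overline{M(\tau_2) \setminus M(\tau_1)}$ recalled above together with the fact that $\p M(\tau_i)$ has measure zero. For the second limit, use (a) again to rewrite $\langle \psi_{h,\alpha}, P_{\tau_2} \widetilde{b}\rangle = \langle W_{\tau_2} \psi_{h,\alpha}, \phi_2\rangle_{L^2(M)}$ and pass to the limit, obtaining $\langle 1_{B_h}, 1_{M(\tau_2)}\rangle = \Vol(B_h)$ since $B_h \subset \overline{M(\tau_2)}$. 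The main obstacle is (a): pinning down the precise form of $\widetilde{b}$ so that the data side $P_{\tau_i} \widetilde{b}$ of (\ref{eqn:ControlProblem}) genuinely equals $W_{\tau_i}^* \phi_i$ in both the Dirichlet and Neumann cases; once this is in place, the remainder is abstract Tikhonov regularization combined with Tataru controllability.
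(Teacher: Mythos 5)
Your proposal is correct and follows essentially the same route as the paper's own (sketched) argument: interpret (\ref{eqn:ControlProblem}) as the normal equation of a Tikhonov-regularized blind control problem with target $1_{M(\tau_i)}$, identify the right-hand side via the adjoint identity $W_{\tau_i}^*1_{M(\tau_i)} = P_{\tau_i}\widetilde{b}$, and invoke Tataru-type approximate controllability to pass to the limit $\alpha \to 0$. You in fact supply slightly more detail than the paper, which defers these steps to \cite{Hoop2016} and does not spell out the second limit in (\ref{eqn:limitingExpression}).
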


We briefly sketch the proof of Lemma
\ref{lemma:approxConstControl}. The main idea is to approximately
solve the blind control problem (\ref{abstract_cp}) with $\phi \equiv
1$ over the spaces $L^2(S_{\tau_i})$ for $i=1,2$. To accomplish this,
for $i=1,2$, one can consider a Tikhonov regularized version of
(\ref{abstract_cp}) depending upon a small parameter $\alpha >
0$. Then, letting $f_{\alpha,i}$ denote the minimum of the associated
Tikhonov functional for $\alpha > 0$, one can obtain $f_{\alpha,i}$ by
solving this functional's normal equation, given by
(\ref{eqn:ControlProblem}). Note that all of the terms defining
$f_{\alpha,i}$ in (\ref{eqn:ControlProblem}) can be computed in terms
of the boundary data, so $f_{\alpha,i}$ can be obtained without
knowing the wavespeed or metric. Appealing to properties of Tikhonov
minimizers, one can then show that $Wf_{\alpha,i} \rightarrow
1_{M(\tau_i)}$ as $\alpha \rightarrow 0$, and hence $W\psi_{\alpha,h}
= Wf_{\alpha,1} - Wf_{\alpha,2} \rightarrow 1_{M(\tau_2)} -
1_{M(\tau_1)} = 1_{\wavecap_\Gamma(y,s,h)}$, where each limit and
equality holds in the $L^2$ sense.


\section{Recovery of information in the interior}
\label{sec:interior}

Propositions \ref{prop:wvfldReconstr}
and \ref{prop:harmonicReconstr} below
can be viewed as variants of Corollaries 1 and 2 in 
\cite{Bingham2008}, the difference being that we 
use the control problem setup discussed in the previous section.
One advantage of this setup is that we do not 
need to make the auxiliary assumption that the limit (14) in \cite{Bingham2008} is non-zero.

\subsection{Wave field reconstruction in the anisotropic case}

We begin with reconstruction of wavefields sampled in semi-geodesic
coordinates, as encoded by the following map.

\begin{definition} 
  Let $(y,s)\in\domain(\Phi_g)$ and $f \in L^2([0,T]\times\Gamma)$.
  The map $L_g : L^2([0,T]\times\Gamma) \rightarrow
  L^2(\domain(\Phi_g))$ is defined pointwise by
  \begin{equation}
    L_g f(y,s) := u^f(T,x(y,s)).
  \end{equation}
\end{definition}

We now show that $L_g$ can be computed from the N-to-D map.

\begin{proposition}
  \label{prop:wvfldReconstr}
  Let $f \in C_0^\infty([0,T] \times \Gamma)$. Let $t \in [0,T]$, $y
  \in \Gamma$ and $s,h > 0$ with $s+h < \sigma_\Gamma(y)$ and $h$
  sufficiently small.  The family of sources
  $\{\psi_{h,\alpha}\}_{\alpha > 0}$ given in Lemma
  \ref{lemma:approxConstControl} satisfies
  \begin{equation}
    \label{eqn:accuracyOfApprox}
    \lim_{\alpha \rightarrow 0} \frac{\langle \psi_{h,\alpha}, K f \rangle_{L^2([0,T]\times\Gamma)} }{ \langle \psi_{h,\alpha}, P_\tau b \rangle_{L^2([0,T]\times\Gamma)} } =
     u^{f}(t,x(y,s)) + \mathcal{O}(h^{1/2}).
  \end{equation}
\end{proposition}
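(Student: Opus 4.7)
The plan is to exploit the factorization $K = W^*W$ together with the two limits supplied by Lemma \ref{lemma:approxConstControl}, and then extract the pointwise value by averaging over a shrinking wave cap. First I would rewrite the numerator as an interior pairing:
\begin{equation*}
\langle \psi_{h,\alpha}, K f\rangle_{L^2([0,T]\times\Gamma)}
= \langle W\psi_{h,\alpha}, W f\rangle_{L^2(M,\,\Vol)}
= \langle W\psi_{h,\alpha}, u^f(T,\cdot)\rangle_{L^2(M,\,\Vol)}.
\end{equation*}
Because $f \in C_0^\infty([0,T]\times\Gamma)$, the wavefield $u^f(T,\cdot)$ lies in $L^2(M)$, so by the first limit in \eqref{eqn:limitingExpression} and Cauchy--Schwarz,
\begin{equation*}
\lim_{\alpha\to 0}\langle\psi_{h,\alpha}, Kf\rangle = \int_{B_h} u^f(T,x)\,dV(x).
\end{equation*}
The denominator is handled directly by the second limit in \eqref{eqn:limitingExpression}, giving $\Vol(B_h)$. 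Since $s+h<\sigma_\Gamma(y)$ ensures $B_h$ has non-empty interior and hence $\Vol(B_h)>0$, the ratio is well-defined in the limit and equals the average of $u^f(T,\cdot)$ over the wave cap $B_h$.

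Next I would pass from this average to the pointwise value. Because the coefficients in \eqref{wave_eq_iso} or \eqref{wave_eq_aniso} are smooth and $f \in C_0^\infty$, the solution $u^f$ is smooth in $M$; in particular $u^f(T,\cdot)$ is Lipschitz on a neighborhood of $x(y,s)$. Since $x(y,s)\in B_h$ and $\diam(B_h)=\mathcal{O}(h^{1/2})$ (the estimate asserted in Figure \ref{fig:waveCapGeom} and valid in the Riemannian setting by the same Pythagorean-type comparison between $d(\cdot,\Gamma)$ and $d(\cdot,y)$ near the minimizing normal geodesic), a first-order Taylor expansion yields
\begin{equation*}
\Vol(B_h)^{-1}\int_{B_h} u^f(T,x)\,dV(x) = u^f(T,x(y,s)) + \mathcal{O}(h^{1/2}),
\end{equation*}
with constants controlled by $\|\nabla u^f(T,\cdot)\|_{L^\infty}$ on a fixed neighborhood of $x(y,s)$. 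Combining the two displays gives \eqref{eqn:accuracyOfApprox} (reading the $t$ in the statement as $T$; the general $t\in[0,T]$ case reduces to this by a time-translation of the source, since the control problem and $K$ are set up at the final time $T$).

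The main obstacle, as I see it, is the diameter bound $\diam(B_h)=\mathcal{O}(h^{1/2})$ in the Riemannian setting: the intuition from the Euclidean Pythagorean picture must be transferred to the manifold $(M,g)$ using a semi-geodesic comparison. I would invoke this estimate from \cite{Hoop2016} rather than re-derive it, so the remaining work is purely bookkeeping: verifying that $h$ can be taken small enough that $B_h$ lies inside a coordinate neighborhood where $u^f(T,\cdot)$ is uniformly $C^1$, and checking the division by $\Vol(B_h)$ before passing to the limit does not spoil the convergence (it does not, since both numerator and denominator converge and the denominator limit is strictly positive for $h>0$).
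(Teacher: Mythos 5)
Your proposal is correct and follows essentially the same route as the paper: reduce the ratio via $K=W^*W$ and Lemma \ref{lemma:approxConstControl} to the average of $u^f(T,\cdot)$ over the wave cap $B_h$, then Taylor-expand the smooth wavefield about $x(y,s)$ and invoke the bound $\diam(B_h)=\mathcal{O}(h^{1/2})$ from the earlier literature. Your added remarks on the positivity of $\Vol(B_h)$ and on reading the $t$ in the statement as $T$ are sensible clarifications of points the paper leaves implicit, but they do not change the argument.
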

\begin{proof}
  Applying Lemma \ref{lemma:approxConstControl}, we have that
  \begin{equation*}
    \lim_{\alpha \rightarrow 0} \frac{\langle \psi_{h,\alpha}, K f \rangle_{L^2(S_\tau)}}{\langle \psi_{h,\alpha}, P_\tau b \rangle_{L^2(S_{\tau})}} =
    \frac{\lim_{\alpha \rightarrow 0} \langle W \psi_{h,\alpha}, W f \rangle_{L^2(M)}}{ \lim_{\alpha \rightarrow 0} \langle \psi_{h,\alpha}, P_\tau b \rangle_{L^2(S_{\tau})}}
    = \frac{\langle 1_{B_h}, u^f(T,\cdot) \rangle_{L^2(M)}}{\Vol(B_h)}.
  \end{equation*}
  Thus it suffices to show that: 
  \begin{equation*}
    \langle 1_{B_h}, u^f(T,\cdot) \rangle = \Vol(B_h) u^{f}(T,x(y,s)) + \Vol(B_h) \mathcal{O}(h^{1/2}).
  \end{equation*}

  Suppose that $h$ is sufficiently small that $B_h$ is contained in
  the image of a coordinate chart $(p, U)$ (that is, we use the
  convention that $p : U \subset \R^n \rightarrow p(U) \subset
  M$). We denote the coordinates on this chart by
  $(x^1,\ldots,x^n)$, and also suppose that $x(y,s)$ corresponds to
  the origin in this coordinate chart. Since $f$ is $C_0^\infty$, it
  follows that $u^f$ is smooth. Thus we can Taylor expand
  $u^f(T,\cdot)$ in coordinates about $x(y,s) \in B_h$, giving,
  \begin{equation*}
    \qquad u^f(T, x^1,\ldots,x^n) = u^f(T, 0,\ldots,0) + \partial_i u^f(T, 0,\ldots,0) x^i + \sum_{|\beta| = 2} R_\beta(x^1,\ldots,x^n) x^{\beta}
  \end{equation*}
  Where $R_\beta$ is bounded by the $C^2$ norm of
  $u^f(T,x^1,\ldots,x^n)$ (i.e. of $u^f(T,\cdot)$ in coordinates), on
  any compact neighborhood $K$ satisfying $0 \in K\subset U$. In
  particular we choose $K$ such that $B_h \subset p(K)$ for $h$
  sufficiently small. Combining these expressions and using that
  $x(y,s)$ corresponds to $0$ in $U$,
  \begin{align*}
    \left|\langle 1_{B_h},  u^f(T,\cdot) \rangle_{L^2(M)} - \Vol(B_h) u^f(T, x(y,s))\right|&  \\
    \leq C \int_{p^{-1}(B_h)} |\partial_i u^f(T, 0,\ldots,0) x^i|  &+ \sum_{|\beta| = 2} |R_\beta(x^1,\ldots,x^n) x^{\beta_1} x^{\beta_2}| \,dx^1 \cdots dx^n
  \end{align*}
  Then for points $p(x) \in M$ with coordinates $x \in U$ sufficiently
  close to $0$, there exist constants $g_*, g^*$ such that $g_* |x|_e
  \leq d(p(x),0) \leq g^* |x|_e$, where $|x|_e$ denotes the Euclidean
  length of the coordinate vector $x$ in $\R^n$. So, let $x =
  (0,\ldots,x^i,\ldots,0)$, then note that $|x^i| = |x|_e \leq (1/g_*) d(0, p(x))
  \leq (1/g_*) \diam(B_h)$. Thus, for $h$ sufficiently small,
  \begin{align*}
    |\langle 1_{B_h},  u^f(T,\cdot) \rangle_{L^2(M)} - \Vol(B_h) u^f(T, x(y,s))|&  \\
    \leq \|u^f\|_{C^1(K)} C\diam(B_h) \Vol(B_h)  &+  \|u^f\|_{C^2(K)} (C\diam(B_h))^2 \Vol(B_h)
  \end{align*}  
  Finally, the discussion in \cite{Bingham2008} implies that
  $\diam(B_h) = \mathcal{O}(h^{1/2})$, which completes the proof.\qquad
\end{proof}

\begin{corollary}
  \label{corr:wvfldReconstr}
  For each $f \in C_0^\infty([0,T]\times\Gamma)$, $L_g f$ can be
  determined pointwise by taking the limit as $h \rightarrow 0$ in
  (\ref{eqn:accuracyOfApprox}). Since $C_0^\infty([0,T]\times\Gamma)$
  is dense in $L^2([0,T]\times\Gamma)$ and $L_g$ is bounded on
  $L^2([0,T]\times\Gamma)$, we have that $L_g f$ is determined for all $f
  \in L^2([0,T]\times\Gamma)$.
\end{corollary}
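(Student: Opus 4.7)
The plan is to proceed in two stages: pointwise recovery on a dense subspace, followed by a continuous extension to $L^2$.

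For the first stage, fix $f \in C_0^\infty([0,T]\times\Gamma)$ and a point $(y,s) \in \domain(\Phi_g)$. By definition of $\Phi_g$ we have $s < \sigma_\Gamma(y)$, so the hypothesis $s+h < \sigma_\Gamma(y)$ of Proposition \ref{prop:wvfldReconstr} is satisfied for all sufficiently small $h>0$. Applying the proposition to each such $h$ gives
\begin{equation*}
 \lim_{\alpha \to 0} \frac{\langle \psi_{h,\alpha}, Kf \rangle_{L^2([0,T]\times\Gamma)}}{\langle \psi_{h,\alpha}, P_\tau b\rangle_{L^2([0,T]\times\Gamma)}} = u^f(T,x(y,s)) + \mathcal{O}(h^{1/2}).
\end{equation*}
Taking $h \to 0$ eliminates the error term, so
\begin{equation*}
 L_g f(y,s) = \lim_{h \to 0}\,\lim_{\alpha \to 0} \frac{\langle \psi_{h,\alpha}, Kf \rangle_{L^2([0,T]\times\Gamma)}}{\langle \psi_{h,\alpha}, P_\tau b\rangle_{L^2([0,T]\times\Gamma)}}.
\end{equation*}
Every ingredient on the right is computable from $\Lambda_{\Gamma,\Rec}^{2T}$: the connecting operator $K$ via the Blagovescenskii identity (\ref{Blago}), and the controls $\psi_{h,\alpha}$ and the auxiliary function $P_\tau b$ through Lemma \ref{lemma:approxConstControl}. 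Hence $L_g f(y,s)$ is determined pointwise from the boundary data.

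For the second stage, observe that $L_g f$ is simply the pullback $(Wf)\circ\Phi_g$ of the wavefield at time $T$. Since $W$ is bounded $L^2([0,T]\times\Gamma)\to L^2(M)$ and $\Phi_g$ is a diffeomorphism onto its image with smooth non-vanishing Jacobian, the operator $L_g$ extends to a bounded linear map $L^2([0,T]\times\Gamma)\to L^2(\domain(\Phi_g))$ (on any compact subdomain of $\domain(\Phi_g)$, where the Jacobian is bounded above and below). Given any $f \in L^2([0,T]\times\Gamma)$, choose $f_n \in C_0^\infty([0,T]\times\Gamma)$ with $f_n \to f$ in $L^2$. By the first stage, the function $L_g f_n$ is determined from $\Lambda_{\Gamma,\Rec}^{2T}$, and by boundedness $L_g f_n \to L_g f$ in $L^2(\domain(\Phi_g))$. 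Thus $L_g f$ is determined as the $L^2$ limit of quantities obtained from the boundary data.

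The only conceptual subtlety is the interchange of the two limits in $h$ and $\alpha$ with the density extension: the proposition is proved for smooth $f$ (the Taylor argument requires $C^2$ regularity of $u^f$), so the pointwise formula cannot be applied directly to rough $L^2$ sources; the density/continuity argument is therefore essential. No single step should be genuinely difficult, provided one takes care to verify that the compact subsets over which one controls the Jacobian of $\Phi_g$ exhaust $\domain(\Phi_g)$, which follows from the openness of $\domain(\Phi_g)$ and the fact that $\Phi_g$ is a diffeomorphism onto its image.
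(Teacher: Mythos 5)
Your proposal is correct and follows essentially the same route as the paper: first recover $L_g f(y,s)$ pointwise for smooth $f$ by taking the iterated limit in $\alpha$ and then $h$ in (\ref{eqn:accuracyOfApprox}), then write $L_g = \Phi_g^* W$ as a composition of bounded operators and extend to all of $L^2([0,T]\times\Gamma)$ by density. Your extra care about bounding the Jacobian of $\Phi_g$ on compact subsets is a reasonable refinement of the paper's appeal to compactness of $\overline{\Gamma}$, but it does not change the argument.
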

\begin{proof}
  First, let $f \in C_0^\infty([0,T]\times\Gamma)$. Taking the limit
  as $h \rightarrow 0$ in the preceding lemma shows that $L_g f(y,s)$ can
  be computed for any pair $(y,s) \in \domain(\Phi_g)$, and thus $L_g f$
  can be determined in semi-geodesic coordinates.

  Now we show that $L_g f$ can be determined for any $f \in
  L^2([0,T]\times\Gamma)$. First we recall that $L_g f = \Phi_g^*
  Wf$. Since the pull-back operator $\Phi_g^*$ just composes a
  function with a diffeomorphism, and $\overline{\Gamma}$ is compact,
  we have that $\Phi_g^*$ is bounded as an operator $\Phi_g^* :
  L^2(\range(\Phi_g)) \rightarrow L^2(\domain(\Phi_g))$. Thus $L_g$ is a
  composition of bounded operators, and hence $L_g : L^2([0,T]\times
  \Gamma) \rightarrow L^2(\domain(\Phi_g))$ is bounded. Let $f \in
  L^2([0,T]\times\Gamma)$ be arbitrary. Since
  $C_0^\infty([0,T]\times\Gamma)$ is dense in $L^2$ one can find a
  sequence $\{f_j\}_{j=1}^\infty \subset
  C_0^\infty([0,T]\times\Gamma)$ such that $f_j \rightarrow f$. Then,
  since $L_g$ is bounded, $L_g f = \lim_{j \rightarrow \infty} L_g f_j$. \qquad
\end{proof}

\subsection{Coordinate transformation reconstruction in the isotropic case}

The map $\Lambda_{\p M}^T$ is invariant under diffeomorphisms that fix the boundary of $M$, and therefore in the anisotropic case it is not possible to compute $g$ in the Cartesian coordinates. The same is true for the wavefields. 
In the isotropic case, on the other hand, it is possible to compute the map $\Phi_g(y,s)$, and in fact, 
the wave speed was determined in 
Belishev's original paper \cite{Belishev1987} by 
first showing that the internal data $u^f(t,x)$ can be recovered in the Cartesian coordinates, and then using the identity
$$
\frac{\Delta u(t,x)}{\p_t^2 u(t,x)} = c^{-2}(x).
$$
It was later observed that the wave speed can be recovered directly from the map $\Phi_g$ without using information on the wavefields in the interior, see e.g. \cite{Belishev1999,Bingham2008}. 
In the present paper we will compute $\Phi_g(y,s)$ by applying the following lemma to the Cartesian coordinate functions.

\begin{proposition}
  \label{prop:harmonicReconstr}
Suppose that $g$ is isotropic, that is, $g = c^{-2}(x) dx^2$.  Let
$\phi \in C^\infty(M)$ be harmonic, that is, $\Delta \phi = 0$.  Let
$t \in [0,T]$, $y \in \Gamma$, and $s,h > 0$ with $s+h <
\sigma_\Gamma(y)$.  Then, for $h$ small, the family of sources
$\{\psi_{h,\alpha}\}_{\alpha > 0}$ given in Lemma
\ref{lemma:approxConstControl} satisfies
  \begin{equation}
    \label{eqn:accuracyOfApproxHarm}
    \lim_{\alpha \rightarrow 0} \frac{ B(\psi_{h,\alpha},  \phi) }{ B(\psi_{h,\alpha}, 1) } =
     \phi(x(y,s)) + \mathcal{O}(h^{1/2}),
  \end{equation}
where 
\begin{equation}
  \label{define_B}
  B(f,\phi)  = \langle f, b
   \phi \rangle_{L^2([0,T]\times\Gamma; dy)} -
  \langle\Lambda^T_{\Gamma,\Rec} f, b\p_\nu \phi
  \rangle_{L^2([0,T]\times\Rec; dy)}.
\end{equation}
Where $b(t) = T - t$.
\end{proposition}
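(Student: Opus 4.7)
The plan is to establish an integration-by-parts identity of the form
\[
  B(f,\phi) \;=\; \pm\, \langle u^f(T,\cdot),\,\phi\rangle_{L^2(M,\Vol)},
\]
which reduces the proposition to a statement about the $\Vol$-average of $\phi$ over the wave cap $\wavecap_\Gamma(y,s,h)$, after which a Taylor expansion of $\phi$ about $x(y,s)$ handles the error term.

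First I would start from $\p_t^2 u^f - c^2 \Delta u^f = 0$, multiply by $b(t)\phi(x) c^{-2}(x)$, and integrate over $(0,T)\times M$. Two integrations by parts in $t$, using $u^f(0,\cdot)=\p_t u^f(0,\cdot) = 0$ together with $b(T)=0$, $b(0)=T$, $b'\equiv -1$, and $b''\equiv 0$, convert the time integral into $u^f(T,x)$, producing $\int_M \phi\, u^f(T,\cdot)\, c^{-2}\, dx$. For the spatial term, I would apply Green's identity to $\int_M (\Delta u^f)\phi\, dx$: since $\p_{\vec n} u^f = f$ is supported in $\Gamma$ and $\Delta \phi = 0$, the first boundary integral collapses to $\langle f, b\phi\rangle_{L^2([0,T]\times\Gamma)}$. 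By finite speed of propagation and the hypothesis $\{x\in\p M : d(x,\Gamma)\le T\}\subset \Rec$, the trace $u^f|_{[0,T]\times\p M}$ is supported in $[0,T]\times\Rec$, so the second boundary integral, which carries $u^f|_{\p M} = \Lambda_{\Gamma,\Rec}^T f$, restricts to $\Rec$ and yields the second term of $B(f,\phi)$ as written in (\ref{define_B}).

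Next, applying this identity with $\phi\equiv 1$ (which is harmonic with $\p_\nu \phi = 0$) and comparing with Lemma~\ref{lemma:approxConstControl} fixes the overall sign and gives $\lim_{\alpha\to 0}B(\psi_{h,\alpha},1) = \pm\,\Vol(B_h)$, with $B_h := \wavecap_\Gamma(y,s,h)$. For a general harmonic $\phi$, applying the same lemma to get $u^{\psi_{h,\alpha}}(T,\cdot) \to 1_{B_h}$ in $L^2(M)$ and passing to the limit yields
\[
  \lim_{\alpha\to 0} \frac{B(\psi_{h,\alpha},\phi)}{B(\psi_{h,\alpha},1)} \;=\; \frac{1}{\Vol(B_h)}\int_{B_h}\phi\, d\Vol.
\]
A Taylor expansion of $\phi$ in local Cartesian coordinates about $x(y,s)$, carried out exactly as in the proof of Proposition~\ref{prop:wvfldReconstr}, then produces $\phi(x(y,s))$ plus a remainder controlled by $\|\phi\|_{C^2}\,\diam(B_h) = \mathcal{O}(h^{1/2})$.

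The main obstacle is the bookkeeping needed to express the spatial boundary integrals purely in terms of the restricted data $\Lambda_{\Gamma,\Rec}^T f$: one must use finite speed of propagation to localize $u^f|_{[0,T]\times\p M}$ to $[0,T]\times\Rec$, and reconcile the sign and normal conventions ($\vec n$ versus $\nu$ in the conformal setting $g = c^{-2}\,dx^2$, where the two differ by a factor of $c$) so that the derived identity is compatible with the sign of $B$ in (\ref{define_B}) and with the normalization $\lim_\alpha \langle \psi_{h,\alpha}, P_{\tau_2} b\rangle = \Vol(B_h)$ already used in Lemma~\ref{lemma:approxConstControl}.
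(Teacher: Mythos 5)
Your proposal is correct and follows essentially the same route as the paper: the paper derives the identity $B(f,\phi)=\langle u^f(T),\phi\rangle_{L^2(M;c^{-2}dx)}$ by noting that $t\mapsto\langle u^f(t),\phi\rangle$ solves an ODE with vanishing initial data and integrating it (which is exactly your ``multiply by $b(t)=T-t$ and integrate by parts twice in time'' computation), uses Green's identity with $\Delta\phi=0$ and finite speed of propagation to restrict the trace to $\Rec$, and then concludes via Lemma \ref{lemma:approxConstControl} and the same Taylor-expansion argument as in Proposition \ref{prop:wvfldReconstr}.
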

\begin{proof}
The proof is analogous to that of Proposition \ref{prop:wvfldReconstr} after observing that 
  \begin{equation*}
    \lim_{\alpha \rightarrow 0} \frac{B(\psi_{h,\alpha},  \phi)}{ B(\psi_{h,\alpha}, 1)} 
= \frac{\langle 1_{B_h}, \phi \rangle_{L^2(M;c^{-2} dx)}}{\langle 1_{B_h}, 1 \rangle_{L^2(M;c^{-2} dx)}}.
\end{equation*}
To see this, it suffices to show that for $\phi$ harmonic and $f \in
L^2([0,T]\times\Gamma)$,
\begin{equation}
  \label{eqn:Bintermed}
  B(f,\phi) = \langle u^f(T),\phi\rangle_{L^2(M; c^{-2} dx)},
\end{equation}
since then
\begin{equation*}
  \lim_{\alpha \rightarrow 0} B(\psi_{h,\alpha},\phi) = \lim_{\alpha \rightarrow 0} \langle u^{\psi_{h,\alpha}}(T),\phi\rangle_{L^2(M; c^{-2} dx)} = \langle 1_{B_h}, \phi \rangle_{L^2(M;c^{-2} dx)}.
\end{equation*}
This expression holds, in particular, for the special case that $\phi
\equiv 1$, since constant functions are harmonic.

The demonstration of (\ref{eqn:Bintermed}) is known, and is based upon
the following computation,
\begin{equation*}
  \begin{array}{rl}
    \p_t^2 \langle u^f(t), \phi\rangle_{L^2(M; c^{-2} dx)} &= \langle \Delta u^f(t), \phi\rangle_{L^2(M; dx)} - \langle u^f(t), \Delta \phi\rangle_{L^2(M; dx)}\\
    &= \langle f(t), \phi\rangle_{L^2(\p M; dy)} - \langle\Lambda f(t), \p_\nu \phi\rangle_{L^2(\p M; dy)},
  \end{array}
\end{equation*}
where we have written $\Lambda f = u^f|_{\p M}$. Thus, the map $t
\mapsto \langle u^f(t), \phi \rangle$ satisfies an ordinary
differential equation with vanishing initial conditions, since $u^f(0)
= \p_t u^f(0) = 0$. Solving this differential equation and evaluating
the result at $t = T$, we get an explicit formula for $\langle u^f(T),
\phi\rangle$ depending upon $f$ and $\Lambda f$:
\begin{equation}
  \label{eqn:harmonicBlagoType}
  \langle u^f(T), \phi \rangle_{L^2(M; c^{-2} dx)} = \langle f, b
   \phi \rangle_{L^2([0,T]\times\Gamma; dy)} -
  \langle\Lambda^T_{\Gamma,\Rec} f, b\p_\nu \phi
  \rangle_{L^2([0,T]\times\Rec; dy)}.
\end{equation}
Which completes the demonstration of (\ref{eqn:Bintermed}). Notice
that we only require $\Lambda f|_\Rec$, since, for $t \in [0,T]$,
$\Lambda f(t)$ vanishes outside of $\Rec$ by finite speed of
propagation. An analogous derivation can be found in \cite{Liu2016}
(with full boundary measurements and the D-to-N map instead of the
N-to-D map).
\end{proof}

As in Corollary \ref{corr:wvfldReconstr}, letting $h \to 0$, we see that the map 
$$
H_c : \{\phi \in C^\infty(M);\ \Delta \phi = 0\} \to C^\infty(\domain(\Phi_g)), \quad
H_c \phi(y,s) = \phi(\Phi_g(y,s)),
$$
can be computed from the N-to-D map, where $g = c^{-2}(x) dx^2$.
To see this, first recall that $\Phi_g(y,s) := \gamma(s; y, \nu)$. Since $\gamma(\cdot;y,\nu)$ is a geodesic and $\nu$ has unit-length vector with respect to the metric $g$, we have that 
$|\p_s \Phi_g(y,s)|_g = 1$.
Then, recall that for $x \in M$ and $v \in T_x M$, the length $|v|_g$ is computed by 
$
|v|_g^2 = c(x)^{-2} |v|_e^2,
$
where $|v|_e$ is the Euclidean length of $v$.
Writing $x^j$, $j=1,\dots,n$, for the Cartesian coordinate functions on $M$, it follows that
\begin{equation}
\label{scheme_isotropic}
\hspace{-.5cm}\Phi_g(y,s) = (H_c x^1(y,s), \dots, H_c x^n(y,s)), 
\quad c(\Phi_g(y,s))^2 = |\p_s \Phi_g(y,s)|_e^2.
\end{equation}
Thus $c$ can be computed in the Cartesian coordinates 
by inverting the first function above and composing the inverse with the second function.
We will show in Section \ref{sec_stability} that this simple inversion method is stable.

The recovery of the internal information encoded by $L_g$ and $H_c$ is the most unstable part of the Boundary Control method as used in this paper. 
The convergence with respect to $h$ is sublinear
as characterized by (\ref{eqn:accuracyOfApprox}) and (\ref{eqn:accuracyOfApproxHarm}), 
and the convergence with respect to $\alpha$ is even worse. 
In general we expect it to be no better than logarithmic.
The recent results 
\cite{Bosi2016,Laurent2015}
prove logarithmic stability for related control and unique continuation problems, and 
\cite{Hoop2016} describes how the instability shows up in numerical examples. 

\section{Recovery of the metric tensor}

Due to the diffeomorphism invariance discussed above, we cannot recover $g$ in the Cartesian coordinates and it is natural to recover $g$ in the semi-geodesic coordinates.
This is straightforward in theory when the internal information $L_g$ is known, and analogously to the elliptic inverse problems with internal data \cite{Bal2013},
we expect that the problem has good stability properties when suitable sources $f$ are used. 
We will next describe a way to choose the sources by using an optimization technique. 
This technique is not stable in general, but 
as shown in the next section, stability holds under suitable convexity assumptions.

\begin{lemma}
  In any local coordinates $(x^1, \dots, x^n)$,
  \begin{equation}
    \label{eqn:getMetricFromLaplacians}
    g^{lk}(x) = \frac{1}{2}\left(\Delta_{g}(x^l x^k) - x^k \Delta_{g}x^l  - x^l \Delta_{g}x^k\right).
  \end{equation}
\end{lemma}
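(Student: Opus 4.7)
The plan is to derive the identity from the standard product rule (Leibniz identity) for the Laplace--Beltrami operator and then specialize to the coordinate functions $x^l, x^k$.

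First, I would recall that for any smooth functions $u, v$ on $(M, g)$ the Laplace--Beltrami operator satisfies the identity
\begin{equation*}
  \Delta_g(uv) = u\,\Delta_g v + v\,\Delta_g u + 2\, g^{ij}\,\partial_i u\,\partial_j v,
\end{equation*}
which is an immediate consequence of writing $\Delta_g w = |g|^{-1/2} \partial_i(|g|^{1/2} g^{ij} \partial_j w)$ and expanding $\partial_j(uv) = u\partial_j v + v\partial_j u$ before differentiating again. Equivalently, this is the statement $\Delta_g(uv) - u\Delta_g v - v\Delta_g u = 2\langle \nabla u, \nabla v\rangle_g$.

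Next, I would apply this identity in the given local coordinates $(x^1,\dots,x^n)$ with the choices $u = x^l$ and $v = x^k$. Since $\partial_i x^l = \delta_i^{\,l}$ and $\partial_j x^k = \delta_j^{\,k}$, the cross term collapses to
\begin{equation*}
  2\, g^{ij}\,\partial_i x^l\,\partial_j x^k = 2\, g^{ij}\, \delta_i^{\,l}\,\delta_j^{\,k} = 2\, g^{lk}.
\end{equation*}
Rearranging the Leibniz identity then gives
\begin{equation*}
  g^{lk} = \tfrac{1}{2}\bigl(\Delta_g(x^l x^k) - x^l\,\Delta_g x^k - x^k\,\Delta_g x^l\bigr),
\end{equation*}
which is exactly (\ref{eqn:getMetricFromLaplacians}).

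There is no real obstacle here beyond verifying the product rule for $\Delta_g$; all the geometric content of the lemma is that $g^{ij} = g^{ij}\,\partial_i x^l\,\partial_j x^k$ picks out the $(l,k)$ entry of the inverse metric when $u,v$ are the coordinate functions themselves, and this drops out of the computation automatically.
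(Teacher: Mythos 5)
Your proposal is correct and follows essentially the same route as the paper: the paper expands $\Delta_g(x^l x^k) = \frac{1}{\sqrt{g}}\partial_i(\sqrt{g}\, g^{ij}\partial_j(x^l x^k))$ directly and identifies the cross term $2g^{lk}$, which is exactly your Leibniz identity specialized to $u = x^l$, $v = x^k$. Stating the general product rule first and then substituting the coordinate functions is merely a reorganization of the same computation.
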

{\em Proof.}  Let $(x^1,\ldots,x^n)$ be local coordinates on
$M$. Write $\alpha := \sqrt{g}$. Then
\begin{align*}
   \Delta_{g}(x^l x^k) &= \frac{1}{\alpha} \p_i\left(\alpha g^{ij}  \p_j (x^l x^k)\right)  \\
   &= \frac{1}{\alpha} \left(\p_i\left(\alpha g^{il}  x^k\right)  + \p_i\left(\alpha g^{ik}  x^l\right) \right) \\
   &= g^{kl}  + x^k \frac{1}{\alpha} \p_i\left(\alpha g^{il}\right)  + g^{lk}  + x^l \frac{1}{\alpha} \p_i\left(\alpha g^{ik}\right)  \\
   &= 2 g^{lk}  + x^k \Delta_{g}x^l  + x^l \Delta_{g}x^k. \qquad \textnormal{\qed}
\end{align*}

\begin{proposition}
  \label{prop:computingG}
  The metric $g$ can be constructed in local semi-geodesic
  coordinates using the operator $L_g$ as data.
\end{proposition}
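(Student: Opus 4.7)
My plan is to reduce the construction of $g$ in semi-geodesic coordinates to the pointwise computation of $\Delta_g$ applied to the coordinate functions, and then recover those Laplacians from $L_g$ by means of the wave equation. Taking the local coordinates $(x^1, \dots, x^n)$ in the preceding lemma to be the semi-geodesic coordinates $(y^1, \dots, y^{n-1}, s)$ on the chart of interest, the lemma reduces the task to computing $\Delta_g x^l$ and $\Delta_g(x^l x^k)$ at each point $x(y_0, s_0)$ in the chart, so I will produce a procedure based on $L_g$ that outputs these values.

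The first step is to observe that $L_g$, combined with time translations of the boundary source, determines the entire time history $t \mapsto u^f(t, x(y_0, s_0))$ at any interior point in the image of $\Phi_g$. Concretely, for $f \in C_0^\infty([0, T_0] \times \Gamma)$ with $T_0 < T$ and $f_\tau(t, y) := f(t - \tau, y)$ extended by zero, time-translation invariance of the wave equation gives $L_g f_\tau(y, s) = u^f(T - \tau, x(y, s))$ for every $\tau \in [0, T - T_0]$. Two $\tau$-derivatives at $\tau = 0$ then recover $\partial_t^2 u^f(T, x(y_0, s_0))$, which by the wave equation equals $\Delta_g u^f(T, x(y_0, s_0))$.

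The second step is to find, for each prescribed smooth $\phi$ (namely $\phi = x^l$ or $\phi = x^l x^k$), a sequence of smooth sources $f_j$ with $u^{f_j}(T, \cdot)$ tending to $\phi$ on a neighborhood of $x(y_0, s_0)$. The procedure of the first step computes $\Delta_g u^{f_j}(T, x(y_0, s_0))$ from $L_g$, and passage to the limit gives $\Delta_g \phi(x(y_0, s_0))$. Substituting these values into the formula of the preceding lemma yields $g^{lk}(x(y_0, s_0))$, and since $(y_0, s_0)$ was arbitrary, this determines the metric in local semi-geodesic coordinates on the chart.

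The main obstacle is the convergence in the second step: pointwise evaluation of a Laplacian requires $u^{f_j}(T, \cdot) \to \phi$ in a topology that controls two derivatives, whereas the sources $\psi_{h,\alpha}$ from Lemma \ref{lemma:approxConstControl} provide only $L^2$-level approximation of wave-cap characteristic functions. The required upgrade to $C^2$-type approximation on a neighborhood of an interior point is available from interior smoothness of wavefields together with a higher-regularity form of the unique continuation principle underlying the Boundary Control method, but it is the delicate part of the argument, and is the same instability that motivates the conditional stability assumptions imposed in Section \ref{sec_stability}.
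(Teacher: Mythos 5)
Your overall skeleton matches the paper's: reduce to the identity $g^{lk} = \tfrac12(\Delta_g(x^lx^k) - x^k\Delta_g x^l - x^l\Delta_g x^k)$ from the preceding lemma, use time-translation invariance to convert $\partial_t^2$ on the source into $\Delta_g$ on the wavefield at $t=T$ (your two $\tau$-derivatives are exactly the paper's identity $L_g \p_t^2 f = \Delta_g u^f(T,\cdot)$), and steer wavefields toward the coordinate monomials. However, your second step contains a genuine gap that you yourself flag but do not close: you need $u^{f_j}(T,\cdot) \to \phi$ in a topology controlling two derivatives near $x(y_0,s_0)$ in order to evaluate $\Delta_g \phi$ pointwise, and no such approximation property is available. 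The controllability underlying the Boundary Control method here rests on Tataru's unique continuation theorem, which yields only density of $\{u^f(T,\cdot)\}$ in $L^2$; there is no ``higher-regularity form'' of this density that would give $C^2$ (or $H^2_{loc}$) approximation of an arbitrary prescribed smooth target on an interior neighborhood, and interior smoothness of the individual wavefields $u^{f_j}(T,\cdot)$ does not upgrade an $L^2$-convergent sequence to a $C^2$-convergent one. As stated, the passage to the limit in your pointwise computation of $\Delta_g\phi(x(y_0,s_0))$ does not go through.

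The paper sidesteps this entirely by giving up pointwise recovery: it chooses $f_{\alpha,\ell}$ as Tikhonov minimizers solving $(L_g^*L_g+\alpha)f = L_g^*\phi^\ell$, obtains $L_g f_{\alpha,\ell} \to \phi^\ell$ only in $L^2(\Omega)$, and then uses that $\Delta_g : L^2(\Omega) \to H^{-2}(\Omega)$ is bounded, so that
\begin{equation*}
\norm{L_g\p_t^2 f_{\alpha,\ell} - \Delta_g\phi^\ell}_{H^{-2}(\Omega)} \le C\norm{u^{f_{\alpha,\ell}}(T,\cdot)-\phi^\ell}_{L^2(\Omega)} \to 0.
\end{equation*}
The metric components $g^{jk}$ are then recovered as limits in $H^{-2}(\omega)$, which suffices to determine the (smooth) tensor $g$ on the chart. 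If you replace your pointwise limit by this distributional limit, your argument closes without needing any regularity upgrade; otherwise the $C^2$-approximation you invoke would itself require a new (and, in this partial-data setting, unavailable) controllability result.
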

\begin{proof}
Let $\Omega = \range(\Phi_g)$, and $\omega \subset
  \Omega$ be a coordinate neighborhood for the semi-geodesic
  coordinates. Let $(x^1,\ldots,x^n)$ denote local semi-geodesic
  coordinates on $\omega$. Fix $1\leq j,k \leq n$ and for 
  $\ell = 1,2,3$ choose $\phi_\ell \in
C_0^\infty(\Omega)$, $\ell = 1,2,3$, such that for all $x \in
\omega$,
\begin{equation}
\label{def_phi_ell}
  \phi_1(x) = x^j x^k, \quad \phi_2(x) = x^j, \quad \phi_3(x) = x^k. 
\end{equation}
Consider the following Tikhonov
  regularized problem: for $\alpha > 0$ find $f \in L^2([0,T]\times
  \Gamma)$ minimizing
  \begin{equation*}
    \|L_g f - \phi^\ell\|_{L^2(\Omega)}^2 + \alpha \|f\|_{L^2([0,T]\times\Gamma)}^2.
  \end{equation*}
It  is a well known consequence of \cite{Tataru1995}, see e.g. 
\cite{Katchalov2001}, that $L_g$ has dense range in $L^2(\Omega)$.
Thus this problem has a minimizer
  $f_{\alpha,\ell}$ which can be obtained as the unique solution to
  the normal equation, see e.g. \cite[Th. 2.11]{Kirsch2011},
  \begin{equation}
  \label{scheme_anisotropic}
    (L_g^*L_g + \alpha) f = L_g^* \phi^\ell.
  \end{equation}
  It follows from \cite[Lemma 1]{Oksanen2013} that the
  minimizers satisfy
  \begin{equation*}
    \lim_{\alpha \rightarrow 0} L_g f_{\alpha,\ell} = \phi^{\ell}.
  \end{equation*}
As the wave equation (\ref{wave_eq_aniso})
is translation invariant in time, we have $L_g \p_t^2 f =
  \Delta_g u^f(T, \cdot)$, and therefore
  \begin{align*}    
    \lim_{\alpha\rightarrow0}\|L_g \p_t^2 f_{\alpha,\ell} - \Delta_g \phi^\ell\|_{H^{-2}(\Omega)} &= \lim_{\alpha\rightarrow0}\|\Delta_g(u^{f_{\alpha,\ell}}(T,\cdot) - \phi^\ell)\|_{H^{-2}(\Omega)}\\ &\le C \lim_{\alpha\rightarrow0}\|u^{f_{\alpha,\ell}}(T,\cdot) - \phi^\ell\|_{L^2(\Omega)} = 0.
  \end{align*}
  Thus for $\ell = 1,2,3$, $L_g \p_t^2 f_{\alpha,\ell} \rightarrow
  \Delta_g\phi^\ell$ in the $H^{-2}(\Omega)$ sense. Using expression
  (\ref{eqn:getMetricFromLaplacians}), and recalling the definitions
  of the target functions $\phi^{\ell}$, then in the local coordinates
  on $\omega$ we have
  \begin{equation}
 \label{scheme_anisotropic_step2}
    g^{jk} = \lim_{\alpha \rightarrow 0} \frac{1}{2}(L_g\p_t^2{f_{\alpha,1}} - x^k L_g\p_t^2 f_{\alpha,2} - x^j L_g\p_t^2 f_{\alpha,3}),
  \end{equation}
  where the convergence is in $H^{-2}(\omega)$. Finally, since
  $\Omega$ can be covered with coordinate neighborhoods such as
  $\omega$, this argument can be repeated to determine $g^{lk}$ in any
  local semi-geodesic coordinate chart.\qquad
\end{proof}

\section{On stability of the reconstruction from internal data}
\label{sec_stability}

When discussing
stability near the set $\Gamma$, 
we will restrict our attention to $\Omega \subset
M$ and a set $\mathcal G$ of smooth Riemannian metrics on $M$ for which
\begin{equation}
  \label{Omega_cond} 
  \overline{\Omega} \subset \Phi_{\tilde g}(\Gamma \times [0, r_0)), \quad \tilde g \in \mathcal G,
\end{equation}
where $r_0 > 0$ is fixed. 

We begin by showing the following consequence of the implicit function theorem.

\begin{lemma}
\label{lem_Phi_inv}
Let $U \subset \R^n$ be open 
and let $\Phi_0 : \overline U \to \R^n$ be continuously differentiable.
Let $p_0 \in U$ and suppose that the derivative $D\Phi_0$ is ivertible at $p_0$. Then there are neighbourhoods $W \subset \R^n$ of $\Phi_0(p_0)$ 
and $\mathcal U \subset C^1(\overline U)$ of $\Phi_0$
such that 
$$
\norm{\Phi^{-1} - \Phi_0^{-1}}_{C^0(\overline W)}
\le C \norm{\Phi - \Phi_0}_{C^1(\overline U)},
\quad \Phi \in \mathcal U.
$$
\end{lemma}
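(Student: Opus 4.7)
The plan is to reduce the statement to the bi-Lipschitz property of $\Phi_0$ on a small closed neighborhood of $p_0$, together with a perturbative argument showing that $\Phi^{-1}$ is well-defined on a common target neighborhood for all $\Phi$ in a sufficiently small $C^1$-ball around $\Phi_0$.

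First I would apply the inverse function theorem to obtain an open neighborhood $V_0 \subset U$ of $p_0$ on which $\Phi_0$ is a $C^1$-diffeomorphism onto its open image. Shrinking $V_0$ to a closed ball, I may additionally assume, by continuity of $D\Phi_0$ and invertibility at $p_0$, that $D\Phi_0(p)^{-1}$ exists with $\|D\Phi_0(p)^{-1}\| \le M$ uniformly for $p \in \overline{V_0}$, and hence by the fundamental theorem of calculus that
$$
|\Phi_0(p) - \Phi_0(p')| \ge c\,|p-p'|, \qquad p,p' \in \overline{V_0},
$$
for some constant $c>0$. Next, for $\Phi$ with $\|\Phi - \Phi_0\|_{C^1(\overline U)} < \varepsilon$ and $\varepsilon$ small, a Neumann series perturbation of $D\Phi_0(p)$ by $(D\Phi - D\Phi_0)(p)$ shows that $D\Phi(p)$ is invertible on $\overline{V_0}$ with uniform bounds, and the same integral argument gives the lower bound $|\Phi(p)-\Phi(p')|\ge (c/2)|p-p'|$ on $\overline{V_0}$.

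The subtle step is to guarantee that $\Phi^{-1}$ is defined on a common open neighborhood $W$ of $\Phi_0(p_0)$. I would choose $W$ to be a closed ball inside the open set $\Phi_0(V_0)$, strictly separated from $\Phi_0(\partial V_0)$, and for $q \in W$ solve $\Phi(p) = q$ by a contraction mapping argument applied to $p \mapsto p - D\Phi_0(p_0)^{-1}(\Phi(p)-q)$ on a small closed ball around $\Phi_0^{-1}(q)$. For $\varepsilon$ small enough (depending only on the separation and on $M$), this map is a contraction into itself, producing a unique $\Phi^{-1}(q) \in V_0$; this defines the neighborhood $\mathcal U$ of $\Phi_0$ in $C^1(\overline U)$.

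Once $\Phi^{-1}$ is available, the estimate itself is the short step. For $q\in \overline W$, set $p = \Phi_0^{-1}(q)$ and $\tilde p = \Phi^{-1}(q)$; both lie in $\overline{V_0}$. Then $\Phi_0(p) = q = \Phi(\tilde p)$, so
$$
\Phi_0(p) - \Phi_0(\tilde p) = \Phi(\tilde p) - \Phi_0(\tilde p).
$$
The left-hand side has norm at least $c\,|p-\tilde p|$ by the bi-Lipschitz bound on $\Phi_0$, while the right-hand side has norm at most $\|\Phi-\Phi_0\|_{C^0(\overline U)} \le \|\Phi-\Phi_0\|_{C^1(\overline U)}$. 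Dividing by $c$ yields the claim with $C = 1/c$, uniformly in $q\in \overline W$. I expect the main obstacle to be bookkeeping in the second step, namely arranging that $W$ depends only on $\Phi_0$ and not on $\Phi$; the contraction mapping framework is what makes this quantitative and uniform.
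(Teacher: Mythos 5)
Your proof is correct, but it takes a genuinely different route from the paper. The paper treats $\Phi$ as a point in the Banach space $C^1(\overline U)$ and applies the implicit function theorem to $F(\Phi,q,p)=\Phi(p)-q$, obtaining a continuously differentiable solution map $H(\Phi,q)=\Phi^{-1}(q)$ on a product neighbourhood; the Lipschitz estimate is then read off from the differentiability of $H$ in the $\Phi$-variable. You instead run a quantitative, finite-dimensional argument: a lower bi-Lipschitz bound $|\Phi_0(p)-\Phi_0(p')|\ge c\,|p-p'|$ on a small ball (which, as you implicitly use, requires $D\Phi_0$ to stay close to the fixed invertible matrix $D\Phi_0(p_0)$ on that ball --- uniform invertibility of $D\Phi_0$ alone would not suffice), persistence of solvability of $\Phi(p)=q$ on a common target neighbourhood $W$ via a contraction mapping, and then the identity $\Phi_0(\Phi_0^{-1}(q))-\Phi_0(\Phi^{-1}(q))=\Phi(\Phi^{-1}(q))-\Phi_0(\Phi^{-1}(q))$ to conclude. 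The paper's approach is shorter and delegates all uniformity to the abstract theorem; yours is self-contained, produces an explicit constant $C=1/c$, and in fact yields the slightly stronger estimate $\norm{\Phi^{-1}-\Phi_0^{-1}}_{C^0(\overline W)}\le C\norm{\Phi-\Phi_0}_{C^0(\overline U)}$, the $C^1$-closeness being needed only to guarantee that $\Phi^{-1}$ is defined on all of $\overline W$. Both arguments establish the lemma as stated.
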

\begin{proof}
Define the map $$
F : C^1(\overline U) \times \R^n \times \R^n \to \R^n, \quad
F(\Phi, q, p) = \Phi(p) - q.
$$
Then $F$ is continuously differentiable, and 
$
D_p F(\Phi_0, p_0) = D \Phi_0(p_0).
$
Thus the implicit function theorem, see e.g. \cite[Th 6.2.1]{Lang1983},
implies that there are neighbourhoods $V, W' \subset \R^n$ of $p_0, \Phi_0(p_0)$ 
and $\mathcal U' \subset C^1(\overline U)$ of $\Phi_0$,
and a continuously differentiable map 
$H : \mathcal U' \times W' \to V$ such that 
$F(\Phi, q, H(\Phi, q)) = 0$.
But this means that $H(\Phi, \cdot) = \Phi^{-1}$ in $W'$.
Choose a neighbourhood $W$ of $\Phi_0(p_0)$ such that $\overline W \subset W'$ and that $\overline W$ is compact.
As $H$ is continuously differentiable, there is a neighbourhood $\mathcal U \subset \mathcal U'$ of $\Phi_0$ such that 
$$
|H(\Phi, q) - H(\Phi_0, q)| \le 2 \max_{q \in \overline W}\norm{D_\Phi H(\Phi_0, q)}_{C^1(\overline U) \to \R^n}
\norm{\Phi - \Phi_0}_{C^1(\overline U)}, \quad \Phi \in \mathcal U.
$$
\end{proof}

We have the following stability result in the isotropic case. 
\begin{theorem}
  \label{th_main_iso}
Consider a family $\mathcal G$ of smooth isotropic metrics $\tilde g = \tilde c^{-2} dx^2$
satisfying (\ref{Omega_cond}). Let $c^{-2} dx^2 \in \mathcal G$ and suppose that
  \begin{equation}
    \label{smallness_C1_iso}
    \norm{\tilde c - c}_{C^2(M)} \le \epsilon, \quad \tilde c^{-2} dx^2 \in \mathcal G.
  \end{equation}
 Then for small enough $\epsilon > 0$, there is $C > 0$ such that 
  \begin{equation*}
    \norm{\tilde c^2 - c^2}_{C(\Omega)} \le C \norm{H_{\tilde c} - H_c}_{C^1(M) \to C^1(\Gamma \times [0,r_0))}.
  \end{equation*}
\end{theorem}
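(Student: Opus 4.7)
The strategy is to invert the reconstruction scheme (\ref{scheme_isotropic}) and show that each step is Lipschitz stable. Writing $\Phi_c := \Phi_g$ when $g = c^{-2} dx^2$, the plan has three stages: (i) control $\|\Phi_{\tilde c} - \Phi_c\|_{C^1(\Gamma \times [0, r_0))}$ by the operator norm $\|H_{\tilde c} - H_c\|$; (ii) use Lemma \ref{lem_Phi_inv} to pass to a $C^0(\overline{\Omega})$ bound on $\Phi_{\tilde c}^{-1} - \Phi_c^{-1}$; (iii) recover $\tilde c^2 - c^2$ from the identity $c^2 = |\partial_s \Phi_c|_e^2 \circ \Phi_c^{-1}$ via a triangle-inequality split.

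Step (i) follows by applying $H_{\tilde c} - H_c$ to each Cartesian coordinate function $x^j$, which is harmonic and belongs to $C^1(M)$. By (\ref{scheme_isotropic}), $\Phi_g^j = H_g x^j$ on $\Gamma \times [0, r_0)$, so the operator norm bound applied componentwise yields $\|\Phi_{\tilde c} - \Phi_c\|_{C^1(\Gamma \times [0, r_0))} \leq C_1 \|H_{\tilde c} - H_c\|$ with $C_1$ depending only on $\max_j \|x^j\|_{C^1(M)}$.

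For step (ii), $\Phi_c$ is a smooth diffeomorphism onto its image, so $D\Phi_c$ is invertible everywhere on the compact preimage $K := \Phi_c^{-1}(\overline{\Omega}) \subset \Gamma \times [0, r_0)$ guaranteed by (\ref{Omega_cond}). For each $p \in K$, Lemma \ref{lem_Phi_inv} produces neighborhoods $W_p \subset \R^n$ of $\Phi_c(p)$ and $\mathcal U_p$ of $\Phi_c$ in $C^1$ such that $\Phi \in \mathcal U_p$ implies $\|\Phi^{-1} - \Phi_c^{-1}\|_{C^0(\overline{W_p})} \leq C \|\Phi - \Phi_c\|_{C^1}$. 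Cover $\overline{\Omega}$ by finitely many $W_{p_k}$. Standard stability of the boundary-normal geodesic flow under $C^2$ perturbations of the metric yields $\|\Phi_{\tilde c} - \Phi_c\|_{C^1} \leq C \|\tilde c - c\|_{C^2} \leq C \epsilon$, so for $\epsilon$ sufficiently small $\Phi_{\tilde c}$ lies in every $\mathcal U_{p_k}$ simultaneously. Patching the local estimates and combining with step (i) yields $\|\Phi_{\tilde c}^{-1} - \Phi_c^{-1}\|_{C^0(\overline{\Omega})} \leq C_2 \|H_{\tilde c} - H_c\|$.

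For step (iii), set $F_g := |\partial_s \Phi_g|_e^2$, so that $c^2 = F_c \circ \Phi_c^{-1}$ and $\tilde c^2 = F_{\tilde c} \circ \Phi_{\tilde c}^{-1}$. Split
\begin{equation*}
\tilde c^2 - c^2 = (F_{\tilde c} - F_c) \circ \Phi_{\tilde c}^{-1} + F_c \circ \Phi_{\tilde c}^{-1} - F_c \circ \Phi_c^{-1}.
\end{equation*}
The first term is $O(\|\Phi_{\tilde c} - \Phi_c\|_{C^1})$ in $C(\Omega)$ because $F_g$ is quadratic in $\partial_s \Phi_g$ and $\|\Phi_{\tilde c}\|_{C^1}$ is uniformly bounded under (\ref{smallness_C1_iso}); the second is $O(\|\Phi_{\tilde c}^{-1} - \Phi_c^{-1}\|_{C^0(\Omega)})$ since $F_c \in C^1(K)$ and $K$ is compact. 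Combining (i)--(iii) gives the theorem. The main obstacle is the uniformity in stage (ii): the single smallness constant $\epsilon$ must be enough to place $\Phi_{\tilde c}$ inside every $\mathcal U_{p_k}$ of the finite cover at once, which is handled by compactness of $K$ together with the geodesic-flow stability estimate.
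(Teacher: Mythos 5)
Your proposal is correct and follows essentially the same route as the paper's proof: the componentwise bound on $\Phi_{\tilde c}-\Phi_c$ via the Cartesian coordinate functions, the application of Lemma \ref{lem_Phi_inv} combined with a finite cover of $\overline\Omega$ and continuity of the geodesic flow in the metric to get the $C^0$ bound on $\Phi_{\tilde c}^{-1}-\Phi_c^{-1}$, and the identical triangle-inequality split (your $F_g = |\partial_s\Phi_g|_e^2$ is exactly the paper's $\tilde c^2\circ\Phi_{\tilde g}$ and $c^2\circ\Phi_g$). No substantive differences.
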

\begin{proof}
We write $\Sigma = \Gamma \times (0,r_0)$, $\tilde g = \tilde c^{-2} dx^2$ and $g = c^{-2} dx^2$.
Then (\ref{scheme_isotropic}) implies that 
\begin{eqnarray*}
\norm{\Phi_{\tilde g} - \Phi_g}_{C^1(\Sigma)}
&\le& C \norm{H_{\tilde c} - H_c}_{C^1(M) \to C^1(\Sigma)}.
\end{eqnarray*}
Moreover, again by (\ref{scheme_isotropic}),
\begin{eqnarray*}
\norm{\tilde c^2 \circ \Phi_{\tilde g} - c^2 \circ \Phi_g}_{C^0(\Sigma)} &\le& 
C \norm{H_{\tilde c} - H_c}_{C^1(M) \to C^1(\Sigma)}.
\end{eqnarray*}
This together with  
\begin{eqnarray*}
\norm{\tilde c^2 - c^2}_{C^0(\Omega)} &\le& 
\norm{\tilde c^2 \circ \Phi_{\tilde g}\circ \Phi_{\tilde g}^{-1} - c^2 \circ \Phi_g\circ \Phi_{\tilde g}^{-1}}_{C^0(\Omega)} 
\\&&\quad+ 
\norm{c^2 \circ \Phi_g\circ \Phi_{\tilde g}^{-1} - c^2 \circ \Phi_g\circ \Phi_g^{-1}}_{C^0(\Omega)}
\end{eqnarray*}
implies that it is enough to study 
$\norm{\Phi_{\tilde g}^{-1}- \Phi_g^{-1}}_{C^0(\Omega)}$.

Note that $(\tilde g, y, s) \mapsto \Phi_{\tilde g}(y,s)$ is continuously differentiable
since it is obtained by solving the ordinary differential equation that gives the geodesics with respect to $\tilde g$.
Indeed, this follows from \cite[Th. 6.5.2]{Lang1983} by considering the vector field 
$F$
that generates the geodesic flow. In any local coordinates,
$F(x,\xi, h) = (\xi, f(x, \xi, h), 0)$
where $f = (f^1, \dots, f^n)$, 
$
f^j(x, \xi, h) = -\Gamma_{k\ell}^j(x,h)\xi^k \xi^\ell, 
$
and $\Gamma_{k\ell}^j(x,h)$ are the Christoffel symbols of a
metric tensor $h$ at $x$, that is,
$$
\Gamma_{k\ell}^j(x,h) = 
\frac{1}{2}h^{jm} \left(\frac{\partial h_{mk}}{\partial x^\ell} + \frac{\partial h_{m\ell}}{\partial x^k} - \frac{\partial h_{k\ell}}{\partial x^m} \right).
$$
In particular, if $\omega$ is a neighbourhood of $p_0 \in \Sigma$ and $\overline\omega \subset \Sigma$,
then 
the map $\tilde c \mapsto \Phi_{\tilde g}$
is continuous from $C^2(M)$ to $C^1(\overline \omega)$.
Thus, for small enough $\epsilon > 0$ in (\ref{smallness_C1_iso}), we may apply   
Lemma \ref{lem_Phi_inv} to obtain 
$$
\norm{\Phi_{\tilde g}^{-1}- \Phi_g^{-1}}_{C^0(W)}
\le C \norm{\Phi_{\tilde g} - \Phi_g}_{C^1(\Sigma)},
$$
where $W$ is a neighbourhood of $\Phi_g(p_0)$.
As $\overline \Omega$ is compact, it can be covered by a finite number of sets like the above set $W$. Thus 
$$
\norm{\Phi_{\tilde g}^{-1}- \Phi_g^{-1}}_{C^0(\Omega)}
\le C \norm{\Phi_{\tilde g} - \Phi_g}_{C^1(\Sigma)}
\le C \norm{H_{\tilde c} - H_c}_{C^1(M) \to C^1(\Sigma)}.
$$
\end{proof}

We now consider the anisotropic case, and describe a geometric condition on $(M,g)$ that will yield
stable recovery of $g$ in the semi-geodesic coordinates of $\Gamma$
from $L_g$ in the set $\Omega$. Specifically, we
will assume that the following problem, which is the dual problem to (\ref{wave_eq_aniso}),
\begin{equation}
  \label{wave_eq_ad}
  \begin{array}{rcl}
    \p_t^2 w - \Delta_g w &=& 0, \quad  \textnormal{in $(0,T) \times M$},\\
    w|_{x \in \p M} &=& 0\\
    w|_{t=T} = 0,\  \p_t w|_{t=T} &=& \phi.
  \end{array}
\end{equation}
is \emph{stably observable} in the following sense.
\begin{definition} 
  \label{def:stable_obs}
  Let $\mathcal G$ be a subset of smooth Riemannian metrics on
  $M$. Then, (\ref{wave_eq_ad}) is \emph{stably observable} for
  $\Omega$ and $\mathcal G$ from $\Gamma$ in time $T > 0$ if there is
  a constant $C > 0$ such that for all $g \in \mathcal G$ and for all
  $\phi \in L^2(\Omega)$ the solutions $w = w^{\phi} = w^{\phi,g}$ of (\ref{wave_eq_ad})
  uniformly satisfy
  \begin{equation}
    \label{stable_obs}
    \norm{\phi}_{L^2(\Omega)} \le C \norm{\p_\nu w^\phi}_{L^2((0,T) \times \Gamma)}.
  \end{equation}
\end{definition}
A complete characterization of metrics exhibiting stable observability
is not presently known, however, it is known that stable observability holds under suitable convexity conditions. 
Indeed, if $(M,g)$ admits a strictly convex function
$\ell$ without critical points, and satisfies
\begin{equation*}
  \{x \in \p M;\ (\nabla \ell(x), \nu)_g \ge 0 \} \subset \Gamma,
\end{equation*}
then there is a neighbourhood $\mathcal G$ of $g$ and $T > 0$ such
that (\ref{wave_eq_ad}) is stably observable for $M$ and $\mathcal G$
from $\Gamma$ in time $T > 0$, see \cite{Liu2016}. 
Note that this result gives stable observability over
the complete manifold $M$ but we will need it only over the set $\Omega$.

Stable observability in the case of 
Neumann boundary condition is poorly understood presently.
For instance, stable observability can not be easily derived 
from an estimate like \cite[Th 3]{Oksanen2014},
the reason being that the 
$H^1$-norm of the Dirichlet trace of a solution to the wave equation 
is not bounded by the $L^2$-norm of the Neumann trace,
while the opposite is true \cite[Th. 4]{Oksanen2014}. See also \cite{Tataru1998} for a detailed discussion. 
For this reason we restrict our attention to the case of Dirichlet boundary condition. 

We use the notation
\begin{equation*}
W_g f(x) = u^f(T,\cdot)|_{\Omega}, \quad f \in L^2([0,T] \times \Gamma).
\end{equation*}
The stable observability (\ref{stable_obs}) says that $W_g^*$ is injective, and by duality, it implies that $W_g : L^2([0,T]
\times \Gamma) \rightarrow L^2(\Omega)$ is surjective (see 
\cite{Bardos1996}). In this case (\ref{wave_eq_aniso}) is said to exactly controllable on $\Omega$, and in particular,
for any $\phi \in L^2(\Omega)$ the control 
problem $W_g f = \phi$ has the minimum norm solution $f = W_g^\dag \phi$ given by the pseudoinverse of $W_g$.


\begin{theorem}
  \label{th_main}
Consider a family $\mathcal G$  of metrics $\tilde g$
satisfying (\ref{Omega_cond}) and suppose that
(\ref{wave_eq_ad}) is stably observable for $\Omega$ and $\mathcal G$ from $\Gamma$ in time $T > 0$.
  Let $g \in \mathcal G$ and suppose that 
  \begin{equation}
    \label{smallness_C1}
    \norm{\tilde g - g}_{C^2(M)} \le \epsilon, \quad \tilde g \in \mathcal G.
  \end{equation}
Then for small enough $\epsilon > 0$, there is $C > 0$ such that 
  \begin{equation*}
    \norm{\Psi^* \tilde g - g}_{H^{-2}(\Omega)} \le C \norm{L_{\tilde g} - L_g}_*,
    \quad \tilde g \in \mathcal G,
  \end{equation*}
  where $\Psi^* = (\Phi_g^*)^{-1} \Phi_{\tilde g}^*$ and
  \begin{equation*}
    \norm{L_g}_* = \norm{L_g}_{L^2((0,T) \times \Gamma) \to L^2(\Gamma\times(0,\epsilon))}
    + \norm{L_g \circ \p_t^2}_{L^2((0,T) \times \Gamma) \to H^{-2}(\Gamma\times(0,\epsilon))}.
  \end{equation*}
\end{theorem}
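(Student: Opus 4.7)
The plan is to carry out the reconstruction of Proposition~\ref{prop:computingG} with an exact controllability scheme (via pseudoinverse) in place of Tikhonov regularization, and to work entirely on the common parameter space $\Sigma = \Gamma \times [0, r_0)$. Set $G = \Phi_g^* g$ and $\tilde G = \Phi_{\tilde g}^* \tilde g$, so that $\Phi_g^*(\Psi^*\tilde g - g) = \tilde G - G$. By (\ref{smallness_C1}) and continuous dependence of the geodesic flow on the metric (as in the proof of Theorem~\ref{th_main_iso}), the diffeomorphism $\Phi_g$ and its inverse have $C^1$-norms bounded uniformly in $\mathcal{G}$, so $\|\Psi^*\tilde g - g\|_{H^{-2}(\Omega)}$ is comparable, up to a uniform constant, to $\|\tilde G - G\|_{H^{-2}(\Sigma_\Omega)}$ for some $\Sigma_\Omega \supset \Phi_g^{-1}(\Omega)$.

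Stable observability (Definition~\ref{def:stable_obs}) gives the uniform lower bound $\|\phi\|_{L^2(\Omega)} \le C \|W_g^*\phi\|_{L^2}$, which by duality makes $W_g$ surjective with a pseudoinverse $W_g^\dag$ of operator norm at most $C$, uniformly in $\mathcal{G}$; the same holds for $\tilde g$. Fix a local chart $\omega \subset \Sigma$ and choose \emph{common} smooth targets $\tilde\phi_\ell \in C_0^\infty(\Sigma)$, $\ell = 1, 2, 3$, equal on $\omega$ to the coordinate products of (\ref{def_phi_ell}); these depend on neither $g$ nor $\tilde g$. Set
\[
f_\ell^g = W_g^\dag(\tilde\phi_\ell \circ \Phi_g^{-1}), \qquad f_\ell^{\tilde g} = W_{\tilde g}^\dag(\tilde\phi_\ell \circ \Phi_{\tilde g}^{-1}),
\]
so that $L_g f_\ell^g = \tilde\phi_\ell = L_{\tilde g} f_\ell^{\tilde g}$ on $\Sigma$ and $\|f_\ell^g\|_{L^2}, \|f_\ell^{\tilde g}\|_{L^2} \le C$ uniformly in $\mathcal{G}$.

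By time-translation invariance of (\ref{wave_eq_aniso}), $L_g \p_t^2 = \Delta_G L_g$ on $\Sigma$, so applying formula (\ref{eqn:getMetricFromLaplacians}) to both $G$ and $\tilde G$ on $\omega$ expresses $G^{jk} - \tilde G^{jk}$ as a linear combination, with coefficients equal to coordinate functions, of the three differences $L_g \p_t^2 f_\ell^g - L_{\tilde g} \p_t^2 f_\ell^{\tilde g}$, $\ell = 1, 2, 3$. The estimate then hinges on the splitting
\[
L_{\tilde g}\p_t^2 f_\ell^{\tilde g} - L_g\p_t^2 f_\ell^g = (L_{\tilde g} - L_g)\p_t^2 f_\ell^{\tilde g} + L_g\p_t^2(f_\ell^{\tilde g} - f_\ell^g).
\]
The first summand is controlled in $H^{-2}$ directly by $\|L_{\tilde g} - L_g\|_* \|f_\ell^{\tilde g}\|_{L^2}$ by definition of $\|\cdot\|_*$. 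For the second, $L_{\tilde g} f_\ell^{\tilde g} = \tilde\phi_\ell$ allows the rewriting
\[
L_g\p_t^2(f_\ell^{\tilde g} - f_\ell^g) = \Delta_G(L_g f_\ell^{\tilde g} - \tilde\phi_\ell) = \Delta_G(L_g - L_{\tilde g}) f_\ell^{\tilde g},
\]
whose $H^{-2}$-norm is bounded by a uniform constant times $\|L_g - L_{\tilde g}\|_* \|f_\ell^{\tilde g}\|_{L^2}$ because (\ref{smallness_C1}) ensures that $\Delta_G : L^2 \to H^{-2}$ is uniformly bounded on $\mathcal{G}$. Summing over $\ell$ yields the theorem.

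The main obstacle is producing sources $f_\ell^{\tilde g}$ that satisfy $L_{\tilde g} f_\ell^{\tilde g} = \tilde\phi_\ell$ \emph{exactly} and with $L^2$-bound uniform in $\tilde g$; this is why stable observability enters decisively, since the Tikhonov sources of Proposition~\ref{prop:computingG} would introduce an $\alpha$-dependent error that cannot be absorbed into $\|L_{\tilde g} - L_g\|_*$. The secondary delicate point is that the targets $\tilde\phi_\ell$ must be defined on the common parameter space $\Sigma$ rather than separately on $M$ with respect to each metric; it is this common-target choice, transported back to $M$ by the two different diffeomorphisms $\Phi_g$ and $\Phi_{\tilde g}$, that is responsible for the pull-back $\Psi^*$ appearing in the conclusion.
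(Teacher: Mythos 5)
Your proposal is correct and follows the same architecture as the paper's proof: both replace the Tikhonov limit by the exact pseudoinverse controls supplied by stable observability, use common coordinate-product targets as in (\ref{def_phi_ell}) on the parameter space $\Sigma$, reduce the claim to estimating $L_{\tilde g}\p_t^2 f_\ell^{\tilde g} - L_g\p_t^2 f_\ell^{g}$ in $H^{-2}$ of a chart via the reconstruction formula (\ref{eqn:getMetricFromLaplacians}), split that difference into the same two summands, and obtain the needed uniform bounds on $\norm{W_{\tilde g}^\dagger}$ and on the pullbacks from (\ref{stable_obs}) and Lemma \ref{lem_Phi_inv}, finishing with a partition of unity. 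The one genuine divergence is the treatment of the second summand: the paper bounds it by $\norm{L_g\circ\p_t^2}\,\norm{L_{\tilde g}^\dagger - L_g^\dagger}\,\norm{\phi_\ell}$ and invokes the Lipschitz continuity of Moore--Penrose pseudoinversion, $\norm{L_{\tilde g}^\dagger - L_g^\dagger}\le 3\max\bigl(\norm{L_{\tilde g}^\dagger},\norm{L_g^\dagger}\bigr)\norm{L_{\tilde g}-L_g}$ (Izumino's theorem), whereas you exploit the intertwining $L_g\p_t^2=\Delta_{\Phi_g^*g}L_g$ together with the exact identities $L_g f_\ell^g=\tilde\phi_\ell=L_{\tilde g}f_\ell^{\tilde g}$ to rewrite the term as $\Delta_{\Phi_g^*g}\bigl((L_g-L_{\tilde g})f_\ell^{\tilde g}\bigr)$, which only needs the $L^2\to L^2$ component of $\norm{\cdot}_*$ and the boundedness of the fixed operator $\Delta_{\Phi_g^*g}:L^2\to H^{-2}$. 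Your route is slightly more elementary --- it avoids the perturbation theory of pseudoinverses entirely and uses only the uniform bound $\norm{L_{\tilde g}^\dagger}\le C$ --- at the cost of re-entering the PDE structure once more, while the paper keeps the estimate purely at the level of operator norms. Both arguments share the same small elisions (formula (\ref{eqn:getMetricFromLaplacians}) produces the contravariant components $g^{jk}$, from which one must still pass to the metric difference itself, and the targets must be supported where both $\Phi_g^{-1}(\Omega)$ and $\Phi_{\tilde g}^{-1}(\Omega)$ cover the chart), so I regard your proof as a valid variant of the paper's.
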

\begin{proof}
We use again the notation $\Sigma = \Gamma \times (0,r_0)$
and write also $\Sigma_T = \Gamma \times (0,T)$. Let $p \in
\Sigma$, and denote by $(x^1, \dots, x^n)$ the coordinates on
$\Sigma$ corresponding to local semi-geodesic coordinates
$(y,r)$.  Let $j,k = 1,\dots, n$ and let $\omega \subset
\Sigma$ be a neighbourhood of $p$.  Choose $\phi_\ell \in
C_0^\infty(\Sigma)$, $\ell = 1,2,3$, 
as in (\ref{def_phi_ell}).
Note that solving (\ref{scheme_anisotropic}) and taking the limit $\alpha \to 0$ is equivalent with computing $L^\dagger \phi^\ell$
see e.g. \cite[Th. 5.2]{Engl1996}.

Analogously to (\ref{scheme_anisotropic_step2}), writing the change to local coordinates explicitly,
it holds that 
\begin{eqnarray*}
(\Phi_g^* g)^{jk}(x) = 
\frac 1 2 (L_g \p_t^2 h_1(x) - &x^k L_g \p_t^2 h_2(x) - x^j L_g \p_t^2 h_3(x)),
\end{eqnarray*}
where  $h_\ell = L_g^\dagger \phi_\ell$, $\ell = 1, 2, 3$.
It will be enough to bound 
\begin{equation*}
\norm{L_{\tilde g} \p_t^2 L_{\tilde g}^\dagger \phi_\ell - L_g \p_t^2 L_g^\dagger \phi_\ell}_{H^{-2}(\omega)}, \quad \ell = 1, 2, 3,
\end{equation*}
in terms of the difference $L_{\tilde g} - L_g$.
We have 
\begin{align*}
&\norm{L_{\tilde g} \p_t^2 L_{\tilde g}^\dagger \phi_\ell - L_g \p_t^2 L_g^\dagger \phi_\ell}_{H^{-2}(\omega)}
\\&\quad\le 
\norm{L_{\tilde g} \p_t^2 L_{\tilde g}^\dagger \phi_\ell - L_g \p_t^2 L_{\tilde g}^\dagger \phi_\ell}_{H^{-2}(\omega)}
+
\norm{L_{g} \p_t^2 L_{\tilde g}^\dagger \phi_\ell - L_g \p_t^2 L_g^\dagger \phi_\ell}_{H^{-2}(\omega)}
\\&\quad\le 
\norm{(L_{\tilde g} - L_g)  \circ \p_t^2}_{L^2(\Sigma_T) \to H^{-2}(\Sigma)}
\norm{L_{\tilde g}^\dagger}_{L^2(\Sigma) \to L^2(\Sigma_T)} \norm{\phi_\ell}_{L^2(\Sigma)}
\\&\qquad\quad+ \norm{L_g  \circ \p_t^2}_{L^2(\Sigma_T) \to H^{-2}(\Sigma)}
\norm{L_{\tilde g}^\dagger - L_g^\dagger}_{L^2(\Sigma) \to L^2(\Sigma_T)}\norm{\phi_\ell}_{L^2(\Sigma)}.
\end{align*}

We omit writing subscripts in operator norms below as 
their meaning should be clear from the context.
Pseudoinversion is continuous in the sense that
\begin{equation*}
  \norm{L_{\tilde g}^\dagger - L_g^\dagger}
  \le 3 \max\left(\norm{L_{\tilde g}^\dagger}, \norm{L_{g}^\dagger}\right)
  \norm{L_{\tilde g} - L_g},
\end{equation*}
see e.g. \cite{Izumino1983}.
It remains to show that $\norm{L_{\tilde g}^\dagger}$ is uniformly bounded for $\tilde g$ satisfying (\ref{smallness_C1}).
Note that $L_{\tilde g} = \Phi_{\tilde g}^* W_{\tilde g}$ and recall that (\ref{stable_obs}) implies
$\norm{(W_{\tilde g}^*)^\dagger} \le C$, which again implies that
$\norm{W_{\tilde g}^\dagger} \le C$.  Here the constant $C$ is uniform for $\tilde g
\in \mathcal G$.  Moreover Lemma \ref{lem_Phi_inv} implies that, for small enough $\epsilon > 0$ in
(\ref{smallness_C1}), we have $\norm{(\Phi_{\tilde g}^*)^{-1}} \le C$.
To summarize, there is uniform constant $C$ for $\tilde g$ 
satisfying (\ref{smallness_C1}) such that 
\begin{equation*}
  \norm{\Phi_{\tilde g}^* \tilde g - \Phi_g^* g}_{H^{-2}(\omega)}
  \le C \norm{L_{\tilde g} - L_g}_* \norm{\phi_\ell}_{L^2(\Sigma)}.
\end{equation*}
The claim follows by using a partition of unity. Note that the functions $\phi_\ell$
can be chosen so that they are uniformly bounded in $L^2$ when $\omega$ is varied.\qquad
\end{proof}

\section{Computational experiment}

In this section, we provide a computational experiment to demonstrate
our approach to recovering an isotropic wave speed from the N-to-D
map. We conduct our computational experiment in the case where $M$ is
a domain in $\R^2$, however, we stress that our approach generalizes
to any $n \geq 2$.

\subsection{Forward modelling and control solutions}

For our computational experiment, we consider waves propagating in the
lower half-space $M = \R \times (-\infty,0]$ with respect to the
    following wave speed:
\begin{equation}
  c(x_1,x_2) = 1 + \frac{1}{2}x_2-\frac{1}{2}\exp\left(-4\left(x_1^2 + (x_2-0.375)^2\right)\right).
\end{equation}
See Figure \ref{fig:lens_model}. Waves are simulated and recorded at
the boundary for time $2T$, where $T = 1.0$. Sources are placed inside
the accessible set $\Gamma = [-\ell_s,\ell_s] \times \{0\}$, where
$\ell_s = 3.0$, and receiver measurements are made in the set $\Rec =
[-\ell_r,\ell_r] \times \{0\}$, where $\ell_r = 4.5$.

\begin{figure}
  \centering 

  \adjustbox{minipage=1.3em,valign=c}{\subcaption{}\label{fig:trip_model}}
  \begin{subfigure}[t]{\dimexpr .85\linewidth-1.3em\relax}
    \centering
    \includegraphics[width=\linewidth, valign=c]{./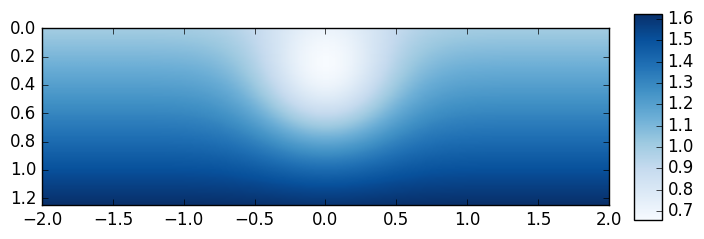}     
  \end{subfigure}
  \hfill

  \adjustbox{minipage=1.3em,valign=c}{\subcaption{}\label{fig:trip_bnc}}
  \begin{subfigure}[t]{\dimexpr .85\linewidth-1.3em\relax}
    \centering
    \includegraphics[width=\linewidth, valign=c]{./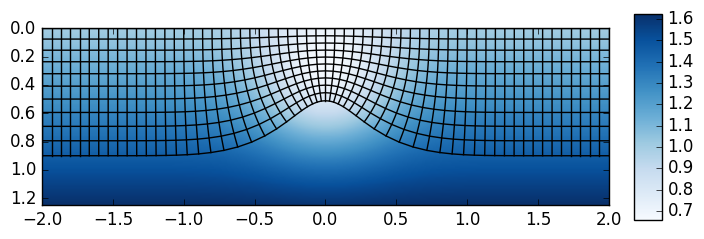}     
  \end{subfigure}
  \hfill

  \adjustbox{minipage=1.3em,valign=c}{\subcaption{}\label{fig:trip_replica}}
  \begin{subfigure}[t]{\dimexpr .85\linewidth-1.3em\relax}
    \centering
    \includegraphics[width=\linewidth, valign=c]{./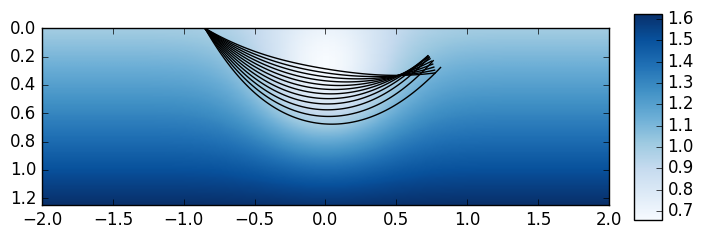}     
  \end{subfigure}

  \caption{ \label{fig:lens_model} (a) True wave speed $c$. (b)
    Semi-Geodesic coordinate grid associated with $c$. \newline (c) Some example ray
    paths with non-orthogonal intersection to $\p M$.}

\end{figure}

For sources, we use a collection of Gaussian functions spanning a
subspace of $L^2([0,T] \times \Gamma)$. Specifically, we consider
sources of the form
\begin{equation*}  
  \varphi_{i,j}(t,x) = C \exp\left(-a ((t-t_{s,i})^2 + (x-x_{s,j})^2)\right).
\end{equation*}
Here, the pairs $(t_{s,i}, x_{s,j})$ are chosen to form a uniformly
spaced grid in $[0.025,0.975] \times [-\ell_s,\ell_s]$ with spacing
$\Delta t_s = \Delta x_s = 0.025$. In total, we consider $N_{t,s} =
39$ source times $t_{s,i}$ and $N_{x,s} = 241$ source locations
$x_{s,j}$. The constant $a$, controlling the width of the basis
functions in space and time, is taken as $a = 1381.6$, and the
constant $C$ is chosen to normalize the functions $\varphi_{i,j}$ in
$L^2([0,T]\times\Gamma)$.

Wave propagation is simulated using a continuous Galerkin finite
element method with Newmark time-stepping. Waves are simulated for $t
\in [-t_0, 2T]$, where $t_0 = 0.1$, although N-to-D measurements are
only recorded in $[0,2T]$. The short buffer interval, $[-t_0, 0.0]$,
is added to the simulation interval in order to avoid numerical
dispersion from non-vanishing sources at $t = 0$. The sources are
extended to Receiver measurements are simulated by recording the
Dirichlet trace $\Lambda^{2T}_{\Gamma,\Rec} \varphi_{i,j}$ at
uniformly spaced points $x_{s,r} \in [\ell_r,\ell_r]$ with spatial
separation $\Delta x_r = 0.0125$ at uniformly spaced times $t_{s,r}
\in [0,2T]$ with temporal spacing $\Delta t_r = 0.0025$. Note that our
receiver measurements are sampled more densely in both space and time
than our source applications. In particular, $\Delta x_r = 0.5 \Delta
x_s$ and $\Delta t_r = 0.1 \Delta t_s$. In total, we take $N_{t,r} =
801$ receiver measurements at each of the $N_{x,r} = 721$ receiver
positions.

We briefly comment on the physical scales associated with the
computational experiment. In the units above, the wave speed is
approximately $1$ at the surface. If we take this to represent a wave
speed of approximately $2000$m/s and suppose that the receiver spacing
corresponds to $\Delta x_r = 12.5$m, then in the same units $\Delta
t_r = .00125$s. In addition, we have that $\ell_s = 4.5$km and $T =
1.0$s, which implies that receivers are placed within a $9.0$km region
and traces are recorded for a total of $2.0$s.  In
Fig. \ref{fig:spectrum} we plot the power spectrum for one of the
sources at a fixed source location, to give a sense of the frequencies
involved. Note that the source mostly consists of frequencies below
$15$Hz.

\begin{figure}
  \centering
  \includegraphics[width=.5\linewidth]{./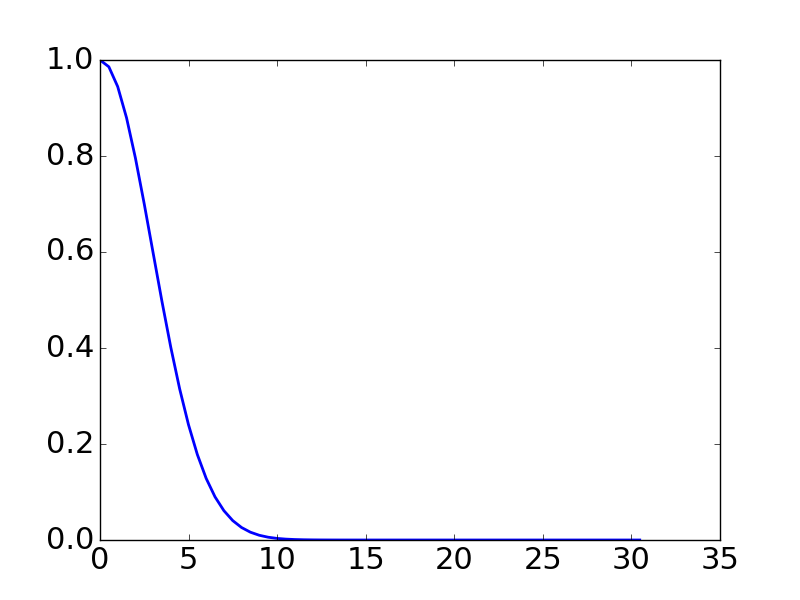}
  \caption{ \label{fig:spectrum} Power spectrum of
    $\varphi_{ij}(\cdot,x_{s,i})$, measured in Hz. We have rescaled the
    power spectrum so that it has a maximum value of $1$.}
\end{figure}

In this computational experiment, we have used sources that have a
significant frequency component at $0$~Hz. Such low frequency
contributions are not representative of physical source wavelets, so
it may be of interest to note that the data we have used can be
synthesized from sources which lack $0$~Hz components.  In particular,
these data used can be synthesized by post-processing data from
sources that are products of Gaussians in space and Ricker wavelets in
time. We note that Ricker wavelets are the second derivatives of
Gaussian functions, and that they are zero-mean sources (hence they
vanish at $0$~Hz) that are frequently used as sources when simulating
synthetic seismic data.  To demonstrate the claim, we first show that
$u^{If} = I(u^f)$, where $I$ denotes the integral $Ih(t,\cdot) :=
\int_{0}^t h(s,\cdot)\,ds$. To see this, we first observe that,
\begin{align*}
  \p_t^2(Iu^f) &= \p_t^2 \left( \int_0^t u^f(s,\cdot)\,ds\right) = \p_t u^f(t,\cdot) = \int_0^t \p_t^2 u^f(s,\cdot)\,ds - \p_t u^f(0, \cdot)\\
  &= \int_0^t c^2(x) \Delta u^f(s,\cdot) \, ds = c^2(x) \Delta (Iu^f).
\end{align*}
Here, we have used the fact that $\p_t u^f(0,\cdot) = 0$ and $(\p_t^2
- c^2(x)\Delta) u^f = 0$ since $u^f$ solves
(\ref{wave_eq_iso}). Likewise, because $u^f$ solves
(\ref{wave_eq_iso}), it follows that $\p_n (Iu^f) = I(\p_n u^f) = If$
and that $\p_t Iu^f(0,\cdot) = u^f(0,\cdot) = 0$.  Since
$Iu^f(0,\cdot) = \int_0^0 u^f(s,\cdot) \,ds = 0$, we see that $Iu^f$
satisfies:
\begin{equation*}
  \begin{array}{rcl}
    \p_t^2 w - c^2(x)\Delta w &=& 0, \quad \textnormal{in $(0,\infty) \times M$}, \\
    \p_{\vec n} w|_{x \in \p M} &=& If,  \\
    w|_{t=0}  = \p_t w|_{t=0}, &=& 0,
  \end{array}
\end{equation*}
thus $Iu^f$ solves (\ref{wave_eq_iso}) with Neumann source $If$. Since
solutions to (\ref{wave_eq_iso}) are unique, we see that $Iu^f =
u^{If}$, as claimed. An immediate consequence is that,
\begin{equation*}
  I\Lambda_{\Gamma,\Rec}^{2T}f = Iu^f|_{\Rec} = u^{If}|_\Rec = \Lambda_{\Gamma,\Rec}^{2T} I f.
\end{equation*}
Thus, $\Lambda_{\Gamma,\Rec}^{2T}I^{j} f = I^{j} \Lambda_{\Gamma,\Rec}^{2T} f$ for $j \in
\N$. Next, we let $\psi_{i,j} = \p_t^2 \varphi_{i,j}$, and note that
$\psi_{i,j}$ is a product of a Ricker wavelet in time (since it is the
second time derivative of a Gaussian function) and a Gaussian in
space. We then observe that,
\begin{equation*}
  \varphi_{i,j}(t,x) = \varphi_{i,j}(0,x) + t \p_t \varphi_{i,j}(0,x) + I^2
\psi_{i,j}(0,x).
\end{equation*} 
Under the parameter choices for $\varphi_{i,j}$, the first two terms
are considerably smaller than the third for $i \geq 4$, since $0$
belongs to the tail of the Guassian $\varphi_{ij}$. For $i \leq 3$,
the same comment holds if we replace $t = 0$ by the buffer interval
start-time, $t = -t_0$ (likewise, we would need to replace $t = 0$ by
$t = -t_0$ when applying $I$). In either event, $\varphi_{i,j} \approx
I^2 \psi_{i,j}$, and $\Lambda_{\Gamma,\Rec}^{2T}\varphi_{i,j} \approx
I^2 (\Lambda_{\Gamma,\Rec}^{2T} \psi_{i,j})$. For our particular
set-up, the N-to-D data agreed to within an error of about $1$ part in
$10^{-4}$. To recapitulate, the data that we have used could be
approximately synthesized by first using the (more) realistic sources
$\psi_{i,j}$ to simulate the data
$\Lambda_{\Gamma,\Rec}^{2T}\psi_{i,j}$, and then post-processing these
data by integrating them twice in time.

We introduce some notation, which we will use when discussing our
discretization of the connecting operator and control problems. First,
let $f \in L^2([0,T]\times\Gamma)$. We use the notation $[f]$ to
denote the vector of inner-products with entries $[f]_i = \langle f,
\varphi_i \rangle_{L^2([0,T]\times\Gamma)}$. In addition, we let
$\hat{f}$ denote the coefficient-vector for the projection of $f$ onto
$\linearSpan\{\varphi_i\}$. Let $A$ be an operator on
$L^2([0,T]\times\Gamma)$. We will use the notation $[A]$ to denote the
matrix of inner-products $[A]_{ij} = \langle A \varphi_i,
\varphi_j\rangle_{L^2([0,T]\times\Gamma)}$.  We approximate all such
integrals by successively applying the trapezoidal rule in each
dimension.

After the N-to-D data has been generated, we use the data
$\Lambda_{\Gamma}^{2T} \varphi_{i,j}$ to discretize the connecting
operator. We accomplish this using a minor modification of the
procedure outlined in \cite{Hoop2016}.  In particular, we discretize
the connecting operator by computing a discrete approximation to
(\ref{Blago}):
\begin{equation*}
  [K] = [J\Lambda_\Gamma^{2T}] -  [R\Lambda_\Gamma^T]G^{-1}[RJ].
\end{equation*}
Here, $G^{-1}$ denotes the inverse of the Gram matrix $G_{ij}
=\langle\varphi_i,\varphi_j\rangle_{L^2([0,T]\times\Gamma)}$.

Next, we describe our implementation of Lemma
\ref{lemma:approxConstControl}. Let $y \in \Gamma$, $s \in [0,T]$ and
$h \in [0,T-s]$. To obtain the control $\psi_{\alpha,h}$ associated
with $\wavecap_{\Gamma}(y,s,h)$, we solve two discrete versions of
the boundary control problem (\ref{eqn:ControlProblem}). Specifically,
for $\tau_1 = s 1_\Gamma$ and $\tau_2 = \tau_{y}^{s+h} \vee
s1_\Gamma$, we solve the discretized control problems:
\begin{equation}
  \label{eqn:discreteControl}
  ([K_{\tau_k}] + \alpha) \hat{f} = [b_{\tau_k}].
\end{equation}
This yields coefficient vectors $\hat{f}_{\alpha,k},$ for $k = 1,2$
associated with the approximate control solutions.  Here, we use the
notation $[K_{\tau_k}]$ to denote a matrix that deviates slightly from
the definition given above. In particular, we obtain $[K_{\tau_k}]$
from $[K]$ by masking rows and columns corresponding to basis
functions $\varphi_{i,j}$ localized near $(t_{s,i}, x_{s,j}) \not\in
S_{\tau_k}$. This gives an approximation to the matrix for $K_{\tau_k}
= P_{\tau_k} K P_{\tau_k}$, which we have observed performs well for
our particular basis.  The right-hand side vector $[b_{\tau_k}]$ is a
discrete approximation to $P_{\tau_k}b$ and we obtain it by first
computing the vector of inner-products $[b]_l = \langle b,
\varphi_l\rangle_{L^2([0,T]\times\Gamma)}$, and then masking the
entries of $[b]$ using the same strategy that we use to compute
$[K_{\tau_k}]$. We solve the control problems
(\ref{eqn:discreteControl}) using Matlab's back-slash function. After
computing the solutions $f_{\alpha,k}$, we then compute
$\psi_{h,\alpha} = f_{\alpha,2} - f_{\alpha,1}$.

In the inversion step, we use the boundary data to approximate
harmonic functions in semi-geodesic coordinates in the interior of
$M$. To describe this step, let $\phi$ be a harmonic function in $M$. Fix $y,
s,$ and $h$, and let $\psi_{\alpha,h}$ denote the control constructed
as in the previous paragraph. We define:
\begin{equation}
  \label{define_Hch}
  H_{c,h}\phi(y,s) := \frac{B(\psi_{h,\alpha} , \phi)}{ B(\psi_{h,\alpha} , 1)},
\end{equation}
and we calculate the right hand side directly using (\ref{define_B}).
Note that this expression coincides with an approximation to the
leading term in the right-hand side of (\ref{eqn:accuracyOfApprox}),
so for small $h$ and $\alpha$, $H_{c,h}\phi(y,s)$ will approximate
$H_c\phi(y,s)$.  However, we recall that (\ref{eqn:accuracyOfApprox})
is only accurate to $\mathcal{O}(h^{1/2})$, and in practice we found
that (\ref{define_Hch}) tends to be closer to $H_c\phi(y,s+h/2) =
\phi(x(y,s+h/2))$. This is not unexpected, since (\ref{define_Hch})
approximates $H_c\phi(y,s)$ by approximating the average of $\phi$
over $B_h = \wavecap_\Gamma(y,s,h)$, and the point $x(y,s)$ belongs to
the topological boundary of $B_h$, whereas $x(y,s+h/2)$ belongs to the
interior of $B_h$. Consequently, we will compare $H_{c,h}\phi(y,s)$ to
$H_c\phi(y,s+h/2)$ below.

\subsection{Inverting for the wave speed}

Our approach to reconstruct the wave speed $c$ consists of two
steps. In the first step, we implement Proposition
\ref{prop:harmonicReconstr} to compute an approximation to the
coordinate transform $\Phi_c$ on a grid of points $(y_i,s_j) \in
\Gamma \times [0,T]$. The second step is to differentiate the
approximate coordinate transform in the $s$-direction and to apply
(\ref{scheme_isotropic}) to compute the wave speed at the estimated
points.

To approximate the coordinate transform $\Phi_c$, we first fix a small
wave cap height $h > 0$, which we use at every grid point. The wave cap
height controls the spatial extent of the waves
$u^{\psi_{h,\alpha}}(T,\cdot)$ in the interior of $M$. Because the
vertical resolution of our basis is controlled by the separation
between sources in time, we choose $h$ to be an integral multiple of
$\Delta t_s$, and in particular, we take $h = 2\Delta t_s$.  Likewise,
we choose the grid-points $(y_i,s_j)$ to coincide with the source
centers for a subset of our basis functions. Specifically, we take
$y_i = x_{s,i}$ and $s_j = t_{s,j}$ for the source locations $x_{s,i}
\in [-1.5,1.5]$ and times $t_{s,j} \in [0.05,0.65]$. In total, the
reconstruction grid contains $N_{x,g} = 121$ horizontal positions, and
$N_{t,g} = 27$ vertical positions. Then, for each grid point
$(y_i,s_j)$ we solve (\ref{eqn:discreteControl}) for $k = 1,2$, and
obtain the source $\psi_{i,j} = \psi_{\alpha,h}$ for the point
$(y_i,s_j)$. Since the Cartesian coordinate functions $x^1$ and $x^2$
are both harmonic, we then apply (\ref{define_Hch}) to both functions
at each grid point, and define
\begin{equation}
  \label{def_phi_ch}
  \Phi_{c,h}(y_i, s_j) := \left(H_{c,h} x^1 (y_i, s_j) , H_{c,h} x^2 (y_i, s_j)\right).
\end{equation}
This yields the desired approximate coordinate transform. We plot the
estimated coordinates in Figure \ref{fig:est_coords} and compare the
estimated transform $\Phi_{c,h}(y_i,s_j)$ to the points $\Phi(y_i, s_j
+ h/2)$ in Figure \ref{fig:coord_compare}.

\begin{figure}[!htb]
  \centering 

  \adjustbox{minipage=1.3em,valign=c}{\subcaption{}\label{fig:est_coords}}
  \begin{subfigure}[t]{\dimexpr .85\linewidth-1.3em\relax}
    \centering
    \includegraphics[width=\linewidth, valign=c]{./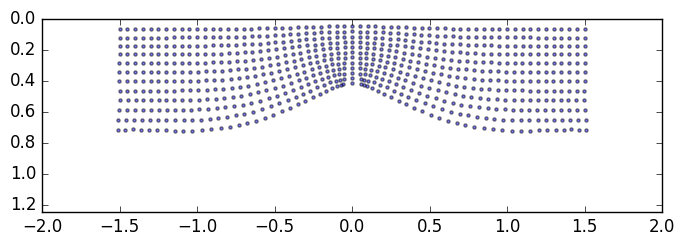}     
  \end{subfigure}
  \hfill
  
  \adjustbox{minipage=1.3em,valign=c}{\subcaption{}\label{fig:coord_compare}}
  \begin{subfigure}[t]{\dimexpr .85\linewidth-1.3em\relax}
    \centering
    \includegraphics[width=\linewidth, valign=c]{./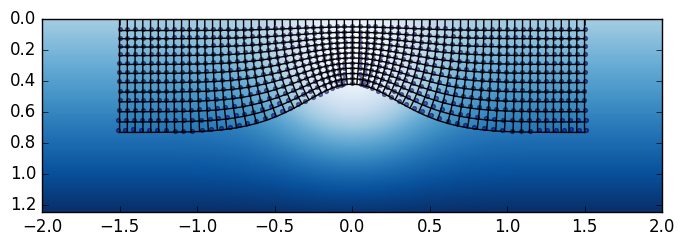}
  \end{subfigure}
  \hfill

  \caption{ (a) The estimated coordinate transform.  We have only
    plotted points for half of the $y_i$ and $s_j$. (b) Estimated
    points $\Phi_{c,h}(y_i,s_j)$ (purple dots) compared to the
    semi-geodesic coordinate grid $\Phi_c(y_i,s_j+h/2)$ (black lines)
    and wave speed.}

  \end{figure}

\begin{figure}
  \centering 

  \adjustbox{minipage=1.3em,valign=c}{\subcaption{}\label{fig:true_c}}
  \begin{subfigure}[t]{\dimexpr .85\linewidth-1.3em\relax}
    \centering
    \includegraphics[width=\linewidth, valign=c]{./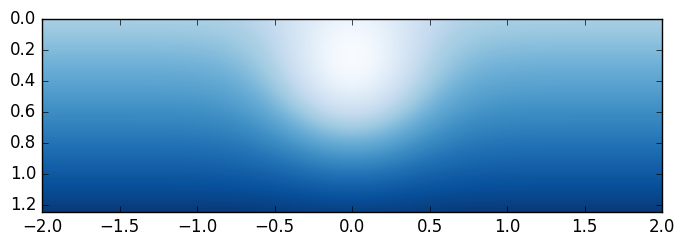}
  \end{subfigure}
  \hfill

  \adjustbox{minipage=1.3em,valign=c}{\subcaption{}\label{fig:estimated_c}}
  \begin{subfigure}[t]{\dimexpr .85\linewidth-1.3em\relax}
    \centering
    \includegraphics[width=\linewidth, valign=c]{./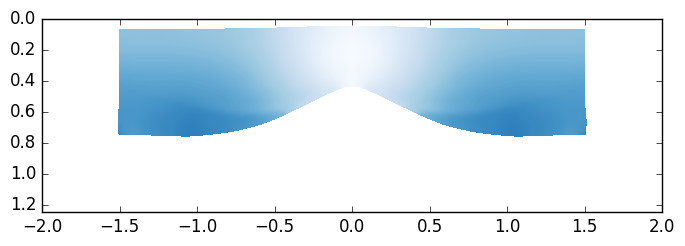}     
  \end{subfigure}
  \hfill

  \caption{ \label{fig:reconstruction_comparison} (a) True wave speed
    $c$. (b) Reconstructed wave speed, plotted at the estimated
    coordinates given by $\Phi_{c,h}$.}

\end{figure}

The last step is to approximate the wave speed. To accomplish this, we
first recall that $c(\Phi_c(y,s))^2 = |\p_s \Phi_c(y,s)|_e^2$. Thus,
for each base point $y_i$, we fit a smoothing spline to each of the
reconstructed coordinates in the $s$-direction, that is, we fit a
smoothing spline to the data sets $\{H_{c,h} x^k (y_i, s_j) : j
=1,\ldots, N_{t,g}\}$ for $k =1,2$ for each $i=1,\ldots,N_{x,g}$. We
then differentiate the resulting splines at $s_j$, for $j=1,\ldots,
N_{t,g}$ to approximate the derivatives $\p_s H_{c,h}x^k(y_i,s_j)$, at
each grid point. Finally, we estimate $c(\Phi_{c,h}(y_i,s_j))$ by
computing $|(\p_s H_{c,h}x^1(y_i,s_j),\p_s
H_{c,h}x^2(y_i,s_j))|_e$. We plot the results of this process in
Figure \ref{fig:reconstruction_comparison}, along with the true wave
speed for comparison. We also compare the reconstructed wave speed
against the true wave speed in Figure \ref{fig:compare_along_slice}
along coordinate slices.

Inspecting the bottom row of Figure \ref{fig:compare_along_slice}, we
see that the reconstruction is generally good at the estimated points.
In particular, the reconstruction quality generally decreases as $s_j$ increases,
which is expected, since the points $\Phi_{c,h}(y_i,s_j)$ with large
$s_j$ correspond to the points which are furthest from the set
$\Gamma$. Hence the N-to-D data contains a shorter window of signal
returns from these points, and thus less information about the
wave speed there.

\begin{figure}[!htb]
  \centering 
  \adjustbox{minipage=1.3em,valign=c}{}
  \begin{subfigure}[t]{\dimexpr .85\linewidth-1.3em\relax}
    \centering
    \includegraphics[width=\linewidth, valign=c]{./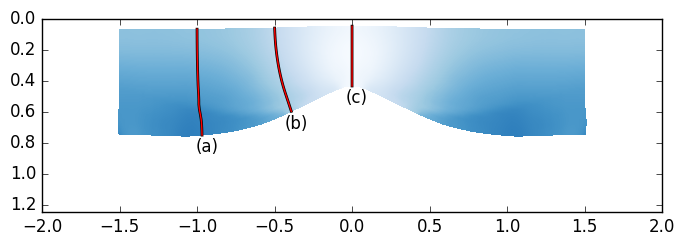}
  \end{subfigure}

  \begin{subfigure}{.32\textwidth}
    \includegraphics[width=\textwidth]{./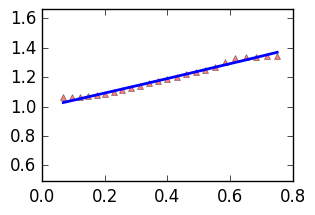}
    \subcaption{}\label{fig:test1}
  \end{subfigure}
  \begin{subfigure}{.32\textwidth}
    \includegraphics[width=\textwidth]{./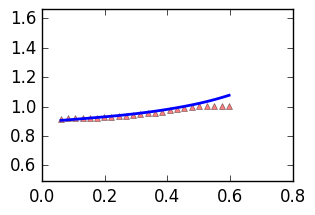}
    \subcaption{}\label{fig:test2}
  \end{subfigure}
  \begin{subfigure}{.32\textwidth}
    \includegraphics[width=\textwidth]{./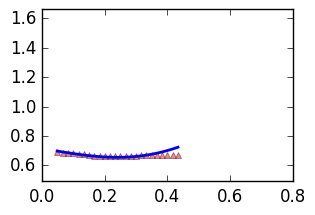}
    \subcaption{}\label{fig:test3}
  \end{subfigure}

  \caption{\label{fig:compare_along_slice} Top: Reconstructed wave
    speed with three approximated geodesics. Bottom row: true wave
    speed (blue curve) and reconstructed wave speed (red triangles)
    evaluated at the estimated coordinates for each of the indicated
    geodesics. The $x$-axis denotes the $x^2$-coordinate (depth) along
    the approximated geodesic.}
\end{figure}

\bibliographystyle{siamplain} 
\bibliography{Bibliography}

\end{document}